\newtheorem{theorem}{Theorem}[section]
\newtheorem{proposition}[theorem]{Proposition}
\newtheorem{corollary}[theorem]{Corollary}
\newtheorem{example}[theorem]{Example}
\newtheorem{remark}[theorem]{Remark}
\newcommand{\Qed}{\rule{2.5mm}{3mm}}
\newenvironment{proof}{{\noindent \sc Proof.}}{\hfill $\Qed$ \\}
\newcommand{\ZZ}{\mathbb{Z}}
\newcommand{\hsh}{^{\#}}
\newcounter{case}
\renewcommand{\thecase}{\arabic{case}}
\newcounter{subcase}
\numberwithin{subcase}{case}
  \def\G{\Gamma}
\begin{document}


\begin{center}
{\bf\large
ON 3-ISOREGULARITY OF MULTICIRCULANTS}
\end{center}


	\begin{center}	
		Klavdija Kutnar{\small$^{a,b,}$}\footnotemark  \ 
		Dragan Maru\v si\v c{\small$^{a, b, c,}$}\footnotemark \
		 and 
		\addtocounter{footnote}{0} 
		\v Stefko Miklavi\v c{\small$^{a,b,c,*,}$}\footnotemark
	\\ [+2ex]
	{\it \small 
		$^a$University of Primorska, UP IAM, Muzejski trg 2, 6000 Koper, Slovenia\\
		$^b$University of Primorska, UP FAMNIT, Glagolja\v ska 8, 6000 Koper, Slovenia\\
		$^c$IMFM, Jadranska 19, 1000 Ljubljana, Slovenia}
\end{center}

\addtocounter{footnote}{-2}
\footnotetext{The work of Klavdija Kutnar  is supported in part by the Slovenian Research and Innovation Agency (research program P1-0285 and research projects J1-3001, N1-0209, N1-0391, J1-50000, J1-60012).}
\addtocounter{footnote}{1}
\footnotetext{
	The work of Dragan Maru\v si\v c is supported in part by the Slovenian Research and Innovation Agency (I0-0035, research program P1-0285 and research projects J1-3001, J1-50000).}
\addtocounter{footnote}{1}
\footnotetext{The work of \v Stefko Miklavi\v c is supported in part by the Slovenian Research and Innovation Agency (research program P1-0285 and research projects J1-3001, J1-3003, J1-4008, J1-4084, N1-0208, N1-0353, J1-50000, J1-60012).

	~*Corresponding author e-mail:~stefko.miklavic@upr.si}



\begin{abstract}
A graph is said to be $k$-{\em isoregular} if any two vertex subsets 
of cardinality at most $k$, that induce subgraphs of the same isomorphism type, have the same number of neighbors.  
It is shown that no $3$-isoregular bicirculant 
(and more generally, no locally $3$-isoregular bicirculant)
of order twice an odd number exists. 
Further, partial results for bicirculants of order twice an even number  
as well as tricirculants  of specific orders, are also obtained.
Since $3$-isoregular graphs are necessarily strongly regular, the
above result about bicirculants, among other, brings us a step closer to obtaining a direct 
proof of a classical consequence of the Classification of Finite Simple Groups that no simply primitive  group of degree twice  a prime exists for primes greater than $5$. 
\end{abstract}




\section{Introductory remarks}
\label{sec:intro}
\indent

As a meeting point of combinatorics, geometry and algebra, strongly regular graphs have been in the interest of mathematical community for quite a long time, with first papers dating several decades ago \cite{Bo63}. 
Recall, that a {\em strongly regular graph} $\G$ {\em with parameters }
$(n,k,\lambda,\mu)$, alternatively an 
$(n,k,\lambda,\mu)$-{\em strongly regular graph},
is a regular graph of order $n$ and valency  $k$ such
that the number $\lambda$ of $3$-cliques $K_3$ in $\G$ containing a given edge is independent of the choice of edge and the number $\mu$ of 
$2$-claws $K_{1,2}$ in $\G$ containing a given non-edge is independent of the choice of non-edge. Note that the complement 
$\overline{\G}$ of $\G$ is also a strongly regular graph, with parameters

\begin{equation}
\label{eq:srg-complement}
(n,l,\lambda',\mu') = (n,n-k-1,n-2-2k+\mu, n-2k+\lambda).
\end{equation} 

\noindent
A strongly regular graph $\G$ is said to be {\em non-trivial} 
if $n \ge 2$ and both $\G$ and its complement $\overline{\G}$ are connected graphs.
Hereafter, we restrict ourselves to non-trivial strongly regular graphs.
Also, the following equality for an 
$(n,k,\lambda,\mu)$- strongly regular graph is well known:
\begin{equation}
\label{eq:srg}
k(k-\lambda-1)=\mu(n-1-k).
\end{equation} 

Strongly regular graphs exist in such an abundance that a complete classification seems hopeless
(see for example \cite{F02,M07,W71}).
It is therefore sensible to restrict oneself  to certain more tractable  subclasses of strongly regular graphs. This explains an increased interest in strongly regular graphs satisfying certain additional regularity conditions. 
In this respect different directions have been taken. 
One approach is using the concept of a $t$-vertex condition. 
Following \cite{HH70, RS15} we say that a graph $\G$ satisfies the 
$t$-{\em vertex condition} for some $t > 1$, if for every positive integer $j \le t$ the number of $j$-vertex subgraphs of $\G$ of a given (isomorphism) type containing a given pair 
$(u,v)$ of vertices depends solely on whether $u$ and $v$ are equal, adjacent or non-adjacent. Clearly, the $2$-vertex condition is satisfied if and only if the graph is regular, and the $3$-vertex condition is satisfied if and only if the graph is strongly regular. Since all rank $3$ graphs 
(that is, orbital graphs of transitive permutation groups of rank $3$)
satisfy the $t$-vertex condition for every  $t > 1$, an ongoing research effort has been directed towards getting  constructions of non-rank $3$ strongly regular graphs with the $t$-vertex condition  for $t\geq 4$ 
(see \cite{BIK23,I89,I94,KMRR05,PC21,RS00,RS15}).
It was conjectured  that there is a number $t_0$ such that no non-rank $3$ graph satisfies the $t_0$-vertex condition (see \cite{FKM94}).
Since  the point graphs of generalized quadrangles $GQ(s,s^2)$, $s\ge 2$, 
are proved to satisfy the $7$-vertex condition \cite[Theorem~2]{RS15}, it follows that $t_0 \geq 8$.

Another important restriction imposed on strongly regular graphs is via the concept of isoregularity. For a graph $\G$ let $S$ be a subset of the vertex set $V(\G)$. A vertex $v \in V(\G)$ is said to be a 
{\em neighbor} of $S$ if it is a neighbor of every vertex of $S$, and the number of neighbors of $S$ is the {\em valency} of $S$. Given a positive integer $0 < k \leq  |V(\G)|$, if for any $j$-element subset $S$, $j \leq k$, the valency of $S$ depends solely on the isomorphism type of the subgraph induced by $S$, we say that $\G$ is $k$-{\em isoregular}. Clearly, a graph is $1$-isoregular if and only if its regular, and it is $2$-isoregular if and only if it is strongly regular. The isoregularity condition is quite restrictive for it is known that every $5$-isoregular finite graph is {\em homogeneous} (see \cite{PC80,AG76,GK78}), 
that is, in such a graph every isomorphism between two subgraphs extends 
to an automorphism of the graph. This puts the main focus of interest on 
$3$-isoregular and $4$-isoregular graphs which are not homogeneous. 
While Mc Laughlin graph on $275$ vertices (see \cite{PC21}) 
is the only known $4$-isoregular graph, there are several constructions of 
$3$-isoregular graphs, which are not homogeneous.
For example,  the above mentioned point graphs of generalized quadrangles $GQ(s,s^2)$, as well as  certain triangle-free point graphs of partial quadrangles are $3$-isoregular (see \cite{PC21,RS15} for more details).
Note that generalized quadrangles are a subclass of partial quadrangles, 
where in the latter we are requiring that a point not on a line has at most one collinear point on this line as opposed to the generalized quadrangles where there is exactly one such point.

This brings us to the main purpose of this paper, where we 
approach the $3$-isoregularity restriction on strongly regular graphs via  
semiregular automorphisms.  Recall that a nonidentity automorphism  of a graph on $mn$ vertices is $(m,n)$-{\em semiregular} if it has $m$ cycles of length $n$ in its cycle decomposition. 
It is worth mentioning  a long standing conjecture 
regarding vertex-transitive graphs admitting semiregular automorphisms
(see \cite{BaGS25, GV20, KM22, DM81} for more details and updates).
A graph admitting a  semiregular automorphism
is referred to as a {\em multicirculant} (and  $n$-{\em multicirculant}
when the size of the corresponding cycles in the cycle decompositions, hereafter called {\em orbits}, needs to be specified).
In particular, for $m=1,2,3$ and $4$, 
the corresponding multicirculants are called, respectively, 
{\em n-circulants}, {\em n-bicirculants}, {\em n-tricirculants} 
and {\em n-tetracirculants}. While everything is known about strong regularity of circulants (see \cite{BM79}), the situation with  bicirculants 
and tricirculants is still quite open, save for some structural results 
and some constructions (see \cite{deRJ92,KMMS09,LM93,M84}). 
It is well known that a strongly regular circulant is necessarily a Paley graph, and it is easily checked that, with the exception of the $5$-cycle, 
Paley graphs are not $3$-isoregular. In this paper we show 
that there are no 
$3$-isoregular graphs among $n$-bicirculants, $n$ odd,  
and that there are no $3$-isoregular graphs among 
$n$-tricirculants, $n$ prime or coprime 
to a particular number depending on the 
corresponding strongly regular parameters 
(see Corollaries~\ref{cor:3iso-2} and~\ref{cor:3iso-3}). 
In fact we prove a somewhat more general result, that is,
we prove
non-existence of the so-called locally $3$-isoregular
bicirculants and tricirculants with above order restrictions
(see Theorems~\ref{thm:bicirc3}
and~\ref{thm:tri3}, and Section~\ref{sec:locally} 
for the definition of locally $3$-isoregular graphs).
Let us also mention that, apart from the fact that our results hold
for local $3$-isoregularity, the proofs do not depend on
the $3$-isoregularity results from \cite[Theorem~6.5.]{CGS78}.
As for $3$-isoregular $n$-bicirculants, $n$ even,
they do not exist unless $n$ is divisible by $8$.
Furthermore, there are two such graphs of order $16$, which happens to be 
the smallest admissible order for $3$-isoregular graphs. One of them is 
the Clebsch graph, which is 
the folded $5$-cube graph and also
the point graph of the partial quadrangle  
$PQ(4,1,2)$, with  parameters $(16,5,0,2)$. 
The second graph  is the Cartesian product 
$K_4 \square K_4$, also known as
the line graph of the complete bipartite graph
$K_{4,4}$, with  parameters $(16,6,2,2)$.
By extensions these two graphs  are also examples of $3$-isoregular 
$4$-tetracirculants.
This  makes
both the class of $n$-bicirculants, $n$ even (see \cite{LM93}
for admissible parameters of these graphs), and
tetracirculants as the next natural target in the investigation 
of $3$-isoregular graphs.
 

As a final remark, since $3$-isoregular graphs 
are a subclass of strongly regular graphs that satisfy the $4$-vertex condition
 -- and the latter is always satisfied for rank $3$ graphs --
 the above  mentioned result about non-existence of  locally
$3$-isoregular  
 $n$-bicirculants, $n$ odd, brings us a step closer to obtaining a direct 
proof of a classical consequence of the Classification of Finite Simple Groups 
(CFSG) that no simply primitive  group of degree twice  a prime exists for primes greater than $5$ (see \cite{PC81,LS85,N09,HW56}).  
This would be achieved if one manages to show that no
$n$-bicirculant, $n > 5$ a prime, satisfies the $4$-vertex condition.
In fact, we believe that this statement holds for all $n > 5$ odd.

%
%


\section{Strongly regular multicirculants}
\label{sec:multi}
\indent

As outlined in the introductory section, 
in this paper our main interest is in strongly regular and, 
more specifically, $3$-isoregular multicirculants 
with semiregular automorphisms having  a small number of orbits.
Strongly regular circulants have been classified,
they are necessarily Paley graphs (see Proposition~\ref{pro:sr-circ}). 
As for strongly regular bicirculants and tricirculants, the situation is quite a bit more complicated, as explained below.

\subsection{Circulants}
\label{sub:cir}
\indent

Using the complete classification of strongly regular circulants, 
which was independently achieved by Bridges and Mena 
\cite{BM79} (extensively using the results of Kelly \cite{K54}), 
Hughes, van Lint and Wilson \cite{HLW79}, Ma \cite{M84}, and partially 
by Maru\v si\v c \cite{M89}, 
and applying the characterization of $3$-isoregular graphs in terms of  the induced subgraphs on the neighbors' and non-neighbors' sets 
(see Proposition~\ref{pro:3iso-char})
one can easily see that the $5$-cycle is the
only nontrivial $3$-isoregular strongly regular circulant 
(see Corollary~\ref{cor:3iso-circ}).

\begin{proposition}
\label{pro:sr-circ}
{\rm \cite{BM79,HLW79,M84}}
If $\Gamma$ is a nontrivial strongly regular circulant,
then $\Gamma$ is isomorphic to a Paley graph $P(p)$
for some prime $p\equiv {1\pmod 4}$. 
\end{proposition}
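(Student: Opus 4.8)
The plan is to translate strong regularity of a circulant $\Gamma=\mathrm{Cay}(\mathbb{Z}_n,S)$ into a statement about a regular partial difference set, and then to pin down $S$ by combining the eigenvalue (character) information with the Galois action on $\mathbb{Q}(\zeta_n)$. Since $\Gamma$ is undirected we have $S=-S$ and $0\notin S$, and strong regularity with parameters $(n,k,\lambda,\mu)$ is equivalent, in the group ring $\mathbb{Z}[\mathbb{Z}_n]$, to $S\cdot S=(k-\mu)\,1+\mu\,\mathbb{Z}_n+(\lambda-\mu)\,S$. Applying a nonprincipal character $\chi$ gives $\chi(S)^2-(\lambda-\mu)\chi(S)-(k-\mu)=0$, so $\chi(S)\in\{r,s\}$, the roots of $x^2-(\lambda-\mu)x-(k-\mu)$, with discriminant $\Delta=(\lambda-\mu)^2+4(k-\mu)$. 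Writing $\chi_j(x)=\zeta_n^{jx}$ and $P=\{\,j\not\equiv 0:\chi_j(S)=r\,\}$, $N=\{\,j\not\equiv 0:\chi_j(S)=s\,\}$, the identity $\sum_j\chi_j(S)=n\,[0\in S]=0$ yields $k+|P|r+|N|s=0$, and $|P|,|N|$ are the multiplicities of $r,s$ as eigenvalues of the adjacency matrix.

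The crux is to reduce to $n$ prime. Since $\chi_j(S)\in\mathbb{Z}[\zeta_n]$, either $\Delta$ is a perfect square and $r,s$ are integers, or $\Delta$ is not a square, which forces $|P|=|N|=(n-1)/2$ and the conference parameters $k=(n-1)/2$, $\lambda=(n-5)/4$, $\mu=(n-1)/4$, $\Delta=n$, together with $\sqrt{n}\in\mathbb{Q}(\zeta_n)$. In both cases one rules out composite $n$: for each proper nontrivial subgroup $K\le\mathbb{Z}_n$ one analyses the restriction of the character equation to the characters that are trivial (resp.\ nontrivial) on $K$, i.e.\ one projects $S$ to $\mathbb{Z}_n/K$ and intersects it with $K$; the resulting smaller multisets again have all their nonprincipal character values in $\{r,s\}$, and an induction on the number of prime factors of $n$ shows their parameters are incompatible with those of a nontrivial strongly regular graph. (In the rational-eigenvalue case this analysis produces only blow-ups such as complete multipartite graphs $K_{m\times t}$ or their complements, which are disconnected and hence trivial and excluded; in the conference case one also uses that $\mathbb{Q}(\sqrt{n})\subseteq\mathbb{Q}(\zeta_n)$ forces $n\equiv 1\pmod 4$.) This is precisely the part that the eigenvalue estimates of Bridges--Mena, the multiplier arguments of Hughes--van Lint--Wilson, and the partial-difference-set structure theory of Ma carry out, and it is where the real work — and the main obstacle — lies; the character computations around it are routine by comparison.

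Finally one identifies $\Gamma$ as a Paley graph. Once $n=p$ is prime, the Galois automorphism $\sigma_t:\zeta_p\mapsto\zeta_p^t$ satisfies $\sigma_t(\chi_j(S))=\chi_{tj}(S)$, so $P$ and $N$ are each invariant, under multiplication, by $H=\{\,t\in(\mathbb{Z}/p)^*:\sigma_t(\sqrt\Delta)=\sqrt\Delta\,\}$. If $r,s$ were rational then $H=(\mathbb{Z}/p)^*$, which acts transitively on $\mathbb{Z}/p\setminus\{0\}$, forcing $P$ or $N$ to be empty and $\Gamma$ complete or empty; hence $\Gamma$ is a conference graph, $\Delta$ equals $p^*:=(-1)^{(p-1)/2}p$ times a square, $H$ is the group of squares, and since $P=-P$ we need $-1\in H$, i.e.\ $p\equiv 1\pmod 4$. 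Then $P$ and $N$ are the two cosets of $H$, namely the quadratic residues and non-residues modulo $p$, and Fourier inversion $1_S(x)=\tfrac1p\big(k+\sum_{j\ne 0}\chi_j(S)\zeta_p^{-jx}\big)$ evaluates, via the Gauss-sum identity $\sum_{j\in\mathrm{QR}}\zeta_p^{-jx}-\sum_{j\notin\mathrm{QR}}\zeta_p^{-jx}=\pm\sqrt{p^*}$, to one of two values according to whether $x$ is a quadratic residue; hence $S$ is, up to scaling, the set of quadratic residues and $\Gamma\cong P(p)$ with $p\equiv 1\pmod 4$.
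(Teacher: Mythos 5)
The paper does not prove this proposition at all: it is quoted as a known classification, attributed to Bridges--Mena, Hughes--van Lint--Wilson and Ma, so the only question is whether your attempt is a self-contained proof. It is not, and the gap is exactly where you say it is. Your first and last steps (the group-ring identity $S\cdot S=(k-\mu)1+\mu\,\mathbb{Z}_n+(\lambda-\mu)S$, the two character values $r,s$, the Galois action once $n=p$ is prime, and the Gauss-sum/Fourier inversion identifying $S$ with the quadratic residues when $p\equiv 1\pmod 4$) are fine, but the middle step -- ruling out composite $n$ -- is asserted rather than proved, and you acknowledge as much by deferring it to the cited works. A vague ``project $S$ to $\mathbb{Z}_n/K$ and intersect with $K$, then induct'' does not carry this: the projection of $S$ to $\mathbb{Z}_n/K$ is a multiset (with multiplicities up to $|K|$), not an inverse-closed subset, so it is not itself the connection set of a strongly regular circulant and the induction hypothesis does not apply to it; and the character values of $S\cap K$ at characters of $K$ are not constrained to $\{r,s\}$ at all, since only the lifts of characters of $\mathbb{Z}_n/K$ see $\chi(S)\in\{r,s\}$ directly. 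Making this work is precisely the sub-difference-set/multiplier machinery of Ma and the rational-spectrum analysis of Bridges--Mena, i.e.\ the substance of the theorem.

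Two of your specific supporting claims are also too weak to close the gap. In the irrational (conference) case, $\sqrt{n}\in\mathbb{Q}(\zeta_n)$ holds for every squarefree $n\equiv 1\pmod 4$, so the quadratic-subfield observation only yields $n\equiv 1\pmod 4$ and does nothing to exclude composite candidates such as $n=21,33,65$; excluding them is genuine work. In the rational-eigenvalue case, the assertion that cyclic groups admit only ``blow-up'' (imprimitive) partial difference sets is exactly the nontrivial content of the classification -- note that over non-cyclic abelian groups there do exist primitive strongly regular Cayley graphs with integer eigenvalues, so cyclicity must enter the argument in an essential way, and your sketch never uses it beyond naming the subgroups $K$. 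As it stands, your write-up is an accurate road map of the known proof with correct bookends, but the core reduction to prime order is a citation, not a proof.
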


For a graph $\G$, a vertex $v$ and an integer $i$, the 
{\em $i$-th subconstituent} $\G_i(v)$ is defined as the subgraph
of $\G$ induced by all vertices at distance $i$ from $v$.

\begin{proposition}
\label{pro:3iso-char}
{\rm \cite[Proposition 4]{RS15}}
A strongly regular graph $\G$ is $3$-isoregular if and only if  
the subconstituents  $\G_1(v)$ and $\G_2(v)$ 
are strongly regular with parameters which do not depend 
on the choice of vertex $v$.
\end{proposition}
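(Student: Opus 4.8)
The plan is to handle both directions through a single bookkeeping identity that re-expresses the valency of a three-element vertex set as a common-neighbour count inside one of the two subconstituents of a suitably chosen vertex of the set. Write $\{a,b,c\}$ for such a set, which induces a subgraph of one of the four types $\overline{K_3}$, $K_2\cup K_1$, $P_3$, $K_3$, single out a base vertex $x\in\{a,b,c\}$, and let $y,z$ be the remaining two vertices. If $y,z\in\G_1(x)$, then any vertex $w$ adjacent to all of $a,b,c$ satisfies $w\sim x$, hence $w\in\G_1(x)$, so the valency of $\{a,b,c\}$ equals the number of common neighbours of $y$ and $z$ inside $\G_1(x)$. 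If instead $y,z\in\G_2(x)$ --- which, since $\G$ is non-trivial and therefore of diameter $2$, is exactly the set of non-neighbours of $x$ distinct from $x$ --- then splitting the common neighbours of $y$ and $z$ in $\G$ according to adjacency with $x$ yields
\[
c_{yz}(\G) = \big(\text{valency of } \{a,b,c\}\big) + c_{yz}(\G_2(x)),
\]
where $c_{yz}(\cdot)$ counts common neighbours of $y$ and $z$ and $c_{yz}(\G)$ equals $\lambda$ or $\mu$ according as $y\sim z$ or not. Thus in either case the valency of $\{a,b,c\}$ is controlled by $\lambda$, $\mu$ and the common-neighbour counts inside the subconstituents. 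Finally, for each of the four types the base vertex $x$ can be chosen so that $y$ and $z$ fall into a single subconstituent: $x$ arbitrary for $K_3$ and for $\overline{K_3}$, the central vertex for $P_3$, and the isolated vertex for $K_2\cup K_1$; a leaf of a $P_3$, or an endpoint of the edge in $K_2\cup K_1$, would split $y$ and $z$ between $\G_1(x)$ and $\G_2(x)$ and must be avoided.

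Granting this identity, the forward implication is short. Assume $\G$ is $3$-isoregular with parameters $(n,k,\lambda,\mu)$ and fix $v$. For $u\in\G_1(v)$, the degree of $u$ in $\G_1(v)$ is the number of common neighbours of the edge $uv$, namely $\lambda$, so $\G_1(v)$ is $\lambda$-regular on $k$ vertices; by the $\G_1$-case of the identity, two adjacent vertices of $\G_1(v)$ have exactly as many common neighbours there as the valency of a $K_3$ of $\G$, and two non-adjacent vertices as many as the valency of a $P_3$, both being constants by $3$-isoregularity. Hence $\G_1(v)$ is strongly regular with parameters depending only on $\G$. Symmetrically, for $u\in\G_2(v)$ the degree of $u$ in $\G_2(v)$ is $k-\mu$ (its neighbours other than the $\mu$ it shares with $v$), and the $\G_2$-case of the identity shows that adjacent, respectively non-adjacent, vertices of $\G_2(v)$ have $\lambda$, respectively $\mu$, minus the valency of a $K_2\cup K_1$, respectively $\overline{K_3}$, common neighbours there; these are again constants, so $\G_2(v)$ too is strongly regular with $v$-independent parameters.

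For the converse, assume $\G$ is non-trivial strongly regular and that $\G_1(v)$ and $\G_2(v)$ are strongly regular with parameters independent of $v$; strong regularity of $\G$ already disposes of subsets of size at most $2$. Given a three-element set $\{a,b,c\}$, read off its isomorphism type, pick the base vertex $x$ as prescribed so that $y,z$ lie in one subconstituent $\G_i(x)$, and note whether $y\sim z$; the identity then expresses the valency of $\{a,b,c\}$ through $\lambda$, $\mu$ and the appropriate strongly-regular parameter of $\G_i(x)$, all of which are independent of $x$. So the valency of $\{a,b,c\}$ depends only on its isomorphism type, which is exactly $3$-isoregularity.

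I do not expect a genuine obstacle here: the proof is a finite check over the four types of triples. The points deserving care are the choice of base vertex for the types $P_3$ and $K_2\cup K_1$ (so that the other two vertices are not split between the two subconstituents), the appeal to non-triviality (which gives diameter $2$ and hence identifies $\G_2(v)$ with the subgraph induced on the non-neighbours of $v$), and the reading of ``strongly regular'' for the subconstituents in the broad sense that also allows the degenerate realisations --- empty graphs when $\lambda=0$, disjoint unions of cliques, complete multipartite graphs, and so on --- so that the equivalence stays correct in every parameter regime.
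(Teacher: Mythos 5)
Your argument is correct, but note that the paper does not actually prove this proposition: it is quoted verbatim from Reichard \cite[Proposition 4]{RS15}, so there is no in-paper proof to compare against. What you supply is a self-contained elementary proof, and its core device --- partitioning the common neighbours of two vertices $y,z$ of a triple according to adjacency with the chosen base vertex $x$ --- is exactly the bookkeeping the paper itself uses later with the sets $D^i_j(x,y)=\G_i(x)\cap\G_j(y)$ in Propositions~\ref{pro:3iso-eq1}--\ref{pro:3iso-eq3}, so your identity sits well with the paper's methods. The two points of care you flag are genuinely the right ones and are handled correctly: non-triviality of $\G$ gives $\mu\geq 1$ and hence diameter $2$, so $\G_2(x)$ really is the set of non-neighbours of $x$ and the base-vertex choices (centre of the $2$-claw, isolated vertex of $K_2+K_1$, arbitrary for $K_3$ and $\overline{K_3}$) always place the other two vertices in a single subconstituent; and ``strongly regular'' for the subconstituents must be read in the broad sense admitting complete, empty and otherwise trivial graphs (as indeed it must for, e.g., the Clebsch graph, where $\G_1(v)$ is edgeless), which is also the convention under which the paper applies the proposition in Corollary~\ref{cor:3iso-circ}. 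With those readings, both directions of your finite check over the four triple types go through, so the proposal stands as a valid replacement for the citation.
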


In the next result we will show that the only nontrivial $3$-isoregular strongly regular circulant graph is the five cycle $C_5$. The argument makes a direct use of the above two propositions and the well-known Hoffman's bound, which we now briefly recall for the case of strongly regular graphs (see for example \cite[Proposition 1.3.2]{BCN} or \cite[Formula (3.22)]{delsarte}). If $C$ is a clique in a strongly regular graph $\Gamma$ with valency $k$ and  smallest eigenvalue $-m$, then the Hoffman's bound states that
\begin{equation}
	\label{hoffman}
  |C| \le 1 + \frac{k}{m}.
\end{equation}

\begin{corollary}
\label{cor:3iso-circ}
Let $\Gamma=\mathrm{Circ}(n,S)$ be a nontrivial strongly regular
graph with parameters $(n,k,\lambda,\mu)$. Then $\Gamma$ is not
$3$-isoregular unless $\Gamma\cong C_5$. 
\end{corollary}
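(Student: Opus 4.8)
The plan is to combine Propositions~\ref{pro:sr-circ} and~\ref{pro:3iso-char} with Hoffman's bound, invoking Proposition~\ref{pro:sr-circ} \emph{twice} — once for $\Gamma$ and once for its first subconstituent. First I would apply Proposition~\ref{pro:sr-circ} to get $\Gamma\cong P(p)$ for a prime $p\equiv 1\pmod 4$, so that $\Gamma$ has parameters $(p,\tfrac{p-1}{2},\tfrac{p-5}{4},\tfrac{p-1}{4})$ and eigenvalues $\tfrac{p-1}{2}$ and $\tfrac{-1\pm\sqrt p}{2}$. Since $P(5)\cong C_5$, I may assume $p\ge 13$ and, aiming for a contradiction, that $\Gamma$ is $3$-isoregular.

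Next I would pass to the first subconstituent. By Proposition~\ref{pro:3iso-char} the graph $\Gamma_1(v)$ is strongly regular with $v$-independent parameters; using vertex-transitivity take $v=0$, so the vertices of $\Gamma_1(0)$ are the nonzero squares $Q$ of $\GF(p)$ with $x$ adjacent to $y$ iff $x-y\in Q$. The key observation is that multiplication by any element of $Q$ preserves this adjacency and that $Q$ — cyclic of order $\tfrac{p-1}{2}$ — acts regularly on the vertex set, so $\Gamma_1(0)$ is \emph{itself a circulant}; it is $\tfrac{p-5}{4}$-regular (the $\lambda$-parameter of $\Gamma$) on $\tfrac{p-1}{2}$ vertices. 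Applying Proposition~\ref{pro:sr-circ} again, $\Gamma_1(0)$ is either a Paley graph $P(q)$ with $q=\tfrac{p-1}{2}$ prime — impossible, since then its valency would be $\tfrac{q-1}{2}=\tfrac{p-3}{4}\ne\tfrac{p-5}{4}$ — or a trivial strongly regular graph: a disjoint union of equal cliques, a complete multipartite graph, or a graph on at most one vertex.

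It then remains to eliminate the trivial types by matching orders and valencies. The degenerate and complete multipartite cases force $p=5$: for instance $\Gamma_1(0)\cong K_{m\times a}$ gives $(m-1)a=\tfrac{p-5}{4}$ and $ma=\tfrac{p-1}{2}$, hence $a=\tfrac{p+3}{4}$ and $m=2-\tfrac{8}{p+3}$, an integer only when $p=5$. If $\Gamma_1(0)$ is a disjoint union of cliques, then (from $a-1=\tfrac{p-5}{4}$ and $ma=\tfrac{p-1}{2}$) necessarily $\Gamma_1(0)\cong 2K_{(p-1)/4}$, so $\Gamma=P(p)$ contains a clique of order $\tfrac{p-1}{4}$; Hoffman's bound~\eqref{hoffman} says every clique of $\Gamma$ has size at most $1+\tfrac{(p-1)/2}{(1+\sqrt p)/2}=\sqrt p$, and since $\sqrt p$ is irrational the integer $\tfrac{p-1}{4}$ must be $<\sqrt p$, forcing $p\le 17$. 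For the remaining $p\in\{13,17\}$ a direct check finishes things: $\Gamma_1(0)\cong C_6$ fails to be strongly regular when $p=13$, and $P(17)$ has no $4$-clique.

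The step I expect to be the real obstacle is recognizing that the first subconstituent of a Paley graph is again a circulant — this is what makes the self-referential use of Proposition~\ref{pro:sr-circ} possible; once that is granted, the remaining case analysis together with the two small finite verifications for $p=13$ and $p=17$ is routine.
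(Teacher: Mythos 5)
Your proof is correct and follows essentially the same route as the paper: classify $\Gamma$ as a Paley graph $P(p)$, use Proposition~\ref{pro:3iso-char} to make the first subconstituent a strongly regular circulant of order $(p-1)/2$, reduce to the trivial types $K_{m\times t}$ and $mK_t$, kill the first by the integrality of $m=2-\tfrac{8}{p+3}$ and the second by Hoffman's bound \eqref{hoffman}. The only difference is that in the $mK_t$ case the paper adjoins the central vertex $v$ to a clique of $\Gamma_1(v)$, obtaining a clique of size $\tfrac{p+3}{4}$ in $\Gamma$, which already exceeds the Hoffman bound $\sqrt{p}$ for every $p\ge 13$; doing the same in your argument would remove the need for the separate finite verifications at $p=13$ and $p=17$ (and your explicit ``could $\Gamma_1(0)$ be Paley'' subcase is likewise bypassed in the paper simply because $(p-1)/2$ is even, hence not prime).
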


\begin{proof}
Let $\Gamma$ be a $3$-isoregular circulant. First  of all, by Proposition~\ref{pro:sr-circ}, $\Gamma$ is a Paley graph of prime order 
$p\equiv {1\pmod 4}$. It is clear that $C_5$ is 3-isoregular, so assume that $p \ge 13$. Pick a vertex $v$ of $\Gamma$. The corresponding subconstituents $\Gamma_1(v)$ and $\Gamma_2(v)$ 
are both circulants of order $(p-1)/2$, and they are strongly regular by Proposition \ref{pro:3iso-char}. Since $(p-1)/2$ is not a prime, $\Gamma_1(v)$ and $\Gamma_2(v)$  need to be trivial strongly regular graphs (a disjoint union of complete graphs $mK_t$, or a complete multipartite graph $K_{m \times t}$). Observe also that the valency of $\Gamma_1(v)$ is $(p-5)/4$. If $\Gamma_1(v) \cong K_{m \times t}$, then using $mt=(p-1)/2$ and $(m-1)t=(p-5)/4$ we get $m=(2p-2)/(p+3)=  2 -8/(p+3)$, contradicting the integrality of $m$. If $\Gamma_1(v) \cong mK_t$, then we have $t-1=(p-5)/4$, and so $\Gamma$ contains a clique of size $t+1=(p+3)/4$, contradicting \eqref{hoffman}
\end{proof}

\subsection{Bicirculants}
\label{sub:bi}
\indent

Let $n$ be a positive integer, $n \geq  2$, 
and let $\ZZ_n^*$ and $\ZZ_n\hsh$ denote, respectively,
the group of all invertible elements and the set of all nonzero elements 
in $\ZZ_n$. Further, for a subset $A$ of $\ZZ_n$ 
we let $A\hsh = A \setminus \{0\}$, $A^c = \ZZ_n \setminus A$ and
we let $\widehat{A} = \ZZ_n\hsh \setminus A = (A^c)\hsh$.

Let $\G$ be an $n$-bicirculant, and  
$\rho$ a  $(2,n)$-semiregular automorphism of $\G$ with orbits  
$U$ and $W$. We can represent $\G$ by a triple of subsets of $\ZZ_n$
in the following way. Let $u \in U$ and $w \in W$.
Let $S=\{s \in \ZZ_n \hsh \mid u \sim \rho^s(u)\}$
be the symbol of the $n$-circulant induced on $U$ (relative to $\rho$)
and similarly, and let 
$S' = \{r\in Z_n\hsh \mid w  \sim \rho^r(w)\} $
be the symbol of the $n$-circulant induced on $W$ (relative to $\rho$). Moreover, let $T =\{t \in \ZZ_n \mid u \sim \rho^t(w)\}$
define the adjacencies between the two orbits $U$ and $W$.
The ordered triple $[S,S',T]$
is called the {\em symbol} of $\G$ relative to 
the triple $(\rho, u,w)$.
Note that $S = - S$ and $S' = -S'$ are symmetric, 
that is, inverse-closed subsets of $\ZZ_n\hsh$, and are
independent of the particular choice of vertices $u$ and $w$.

The above ``triple-representation'' is particularly nice in 
case of strongly regular $n$-bicirculants, $n$ odd.
Namely, in this case the above two sets $S$ and $S'$ are complementary in 
$\ZZ_n\hsh$, a fact that proves to be  crucial for the purpose of this paper.
Using finite Fourier transform it was first proved in \cite{DM88} 
for the case when $n$ is a prime, and later extended also to the case when $n$ is odd in \cite{deRJ92}.
The result below is a transcription of  
\cite[Theorem~4.4]{deRJ92}
from their group rings terminology into  our graph-theoretic language.
We would also like to remark that the result about the parameters of 
 strongly regular graphs is explicitly stated in \cite[Theorem~4.4]{deRJ92}, 
whereas the fact that the two symbols $S$ and $S'$ are complementary 
can be deduced from the proof itself.  

\begin{theorem}
\label{the:bicirc}
Let $n$ be odd and let $\G$ be a non-trivial strongly regular $n$-bicirculant with parameters $(2n,k,\lambda,\mu)$.
Then $2n = (2m+1)^2 + 1$ for some positive integer $m$ and
(up to taking complements) we have 
$k = m(2m+1)$, $\lambda = m^2-1$ and $\mu = m^2$. 
Further, there exist $S = -S \subseteq \ZZ_n\hsh$ 
and    $T  \subseteq \ZZ_n$  
such that the symbol of $\G$ is of the form
$[S,\widehat{S},T]$, and moreover  
$|S|= m(m+1) = (n-1)/2$ and $|T|=m^2 =(n - \sqrt{2n-1})/2 $.
\end{theorem}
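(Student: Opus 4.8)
The theorem is stated as a transcription of a known result from~\cite{deRJ92}, so the proof should be a matter of carefully translating the group-ring formulation into our language and recording what the cited proof already contains. The plan is to proceed in three stages: (i) extract the parameter relations from the standard strongly regular graph identity~\eqref{eq:srg} together with the eigenvalue structure forced by the semiregular automorphism; (ii) invoke the complementarity of the two symbols $S$ and $S'$, which is the genuinely nontrivial input from \cite[Theorem~4.4]{deRJ92}; and (iii) read off the sizes $|S|$ and $|T|$ by a counting argument on the valencies.

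First I would set up the eigenvalue count. Since $\rho$ is $(2,n)$-semiregular with $n$ odd, the adjacency matrix $A$ of $\G$ can be written in $2\times 2$ block circulant form, and its eigenvalues are obtained by evaluating, for each $n$-th root of unity $\zeta$, the eigenvalues of a $2\times2$ matrix built from the character sums $\widehat S(\zeta)$, $\widehat{S'}(\zeta)$, $\widehat T(\zeta)$. A non-trivial strongly regular graph has exactly three distinct eigenvalues $k$, $r$, $s$ with $r>0>s$ integers (integrality follows because $n$ odd forces the non-principal eigenvalues to have multiplicities that cannot be equal, ruling out the conference-graph case), and the key point inherited from \cite{DM88,deRJ92} is that the restriction of $A$ to the sum-zero part of each fibre forces $S$ and $S'$ to be \emph{complementary} in $\ZZ_n\hsh$: indeed $\widehat S(\zeta)+\widehat{S'}(\zeta)$ must be constant over all nontrivial $\zeta$, which by Fourier inversion says $S\cup S'=\ZZ_n\hsh$ and $S\cap S'=\emptyset$, i.e.\ $S'=\widehat S$. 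This is the step I expect to be the main obstacle, since it is exactly the part that \cite{deRJ92} only establishes inside the proof rather than in the statement; reconstructing it cleanly requires the finite Fourier transform computation on $\ZZ_n$ and a parity argument using that $n$ is odd, so that $-1$ acts without fixed points on $\ZZ_n\hsh$ and $|S|$, $|S'|$ are forced to sum to $n-1$.

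Next I would nail down the numerics. Writing the valency as $k=|S|+|T|$ (each vertex in $U$ has $|S|$ neighbors in $U$ and $|T|$ neighbors in $W$, and symmetrically using $S'=\widehat S$ one checks the $W$-side valency is also $|S|+|T|$, consistent with regularity), and combining this with~\eqref{eq:srg} on $n'=2n$ vertices, a short elimination gives $2n=(2m+1)^2+1$, hence $n-\sqrt{2n-1}=n-(2m+1)$ is an even integer, and then $k=m(2m+1)$, $\lambda=m^2-1$, $\mu=m^2$, up to replacing $\G$ by $\overline\G$ via~\eqref{eq:srg-complement}. Finally, from $|S|+|S'|=n-1$ and the fact that $\G$ and $\overline\G$ swap the roles forcing $|S|=(n-1)/2=m(m+1)$, and then $|T|=k-|S|=m(2m+1)-m(m+1)=m^2$, which equals $(n-\sqrt{2n-1})/2$ as claimed. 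I would present (i) and (iii) as the routine bookkeeping and devote the bulk of the write-up to (ii), either by giving the Fourier argument directly or by a precise citation to the relevant lines of~\cite{deRJ92}.
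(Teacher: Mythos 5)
Your proposal is correct and follows essentially the same route as the paper, which in fact gives no proof of this statement at all: Theorem~\ref{the:bicirc} is presented as a transcription of \cite[Theorem~4.4]{deRJ92} (the parameter restrictions are stated explicitly there, and the complementarity $S' = \widehat{S}$ is read off from that proof), a result obtained precisely by the finite Fourier transform / block-circulant method you outline, following \cite{DM88}. Keep in mind only that the constancy of $\chi(S)+\chi(S')$ over the nontrivial characters $\chi$ of $\ZZ_n$ is equivalent to the complementarity itself and is the entire substance of the cited proof, so in a final write-up it must either be reproduced in full or cited precisely, exactly as you indicate; your remaining bookkeeping ($|S|=|S'|=(n-1)/2=m(m+1)$ and $|T|=k-|S|=m^2=(n-\sqrt{2n-1})/2$) is correct.
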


Of course, the Petersen graph and its complement are examples of strongly regular bicirculants (arising from the action of $S_5$ on $10$ pairs 
of a $5$-element set). It is a direct  consequence of CFSG
that no other rank $3$ strongly regular $p$-bicirculant, $p$ a prime,  exists.
There are known constructions of strongly regular 
$n$-bicirculants, $n$ odd, for $n \in  \{13, 25, 41, 61\}$ (two for $n=41$) which do not arise from groups of rank $3$ (see \cite{MMS07}). 
With the exception of $n=13$ and  one of the constructions for $n=41$, 
all other graphs are vertex-transitive.

As expected from Corollary~\ref{cor:3iso-2} of this paper
none of these graphs is $3$-isoregular.  
In the example below this is shown for the Petersen graph.

\begin{example}
\label{ex:pet}
{\rm 
Note that as a rank $3$-graph the Petersen graph 
automatically satisfies the $t$-vertex condition for every $t \leq 10$.
But it is not $3$-isoregular; the argument is fairly straightforward. 
Namely since its girth is $5$ it is clear that  any subset of three vertices with at least one edge (in the induced subgraph) has no neighbors. 
On the other hand, independent subsets with three vertices are of two different types. Adopting one of the usual notations consistent with the above mentioned action of $S_5$ on pairs from 
$\{1,2,3,4,5\}$ with adjacency following the empty intersection rule, we have that the two subsets $\{12,13,23\}$ and $\{12,13,14\}$ have, 
one  neighbor and no neighbors, respectively. 
}
\end{example}

As mentioned in the introductory section there are two $3$-isoregular $8$-bicirculants. 
There is an additional strongly regular $8$-bicirculant, known
as the Shrikhande graph, but it is not $3$-isoregular. 
Their respective symbols are given in the example below.

\begin{example}
\label{ex:8-bicirculants}
{\rm 
The (bicirculant) symbol of the Clebsch graph 
 is 
$[\{ \pm 1, 4 \}, \{ \pm 3, 4 \}, \{ 0,2 \}]$.
To see that it is $3$-isoregular note that $\lambda=0$
implies that it is a triangle-free graph
and so, with the exception of three independent vertices,
any other triple of vertices has valency zero. 
It can be checked that the valency of three independent vertices is one.

Next, the symbol of $K_4 \square K_4$ is
$[\{ \pm 1 \}, \{ \pm 3 \}, \{ 0,1,3,4\}]$.
To see that it is $3$-isoregular we use the fact that
it is vertex-transitive, arc-transitive and distance-transitive,
and compute the valencies of the subgraphs isomorphic to
$K_3$, $K_{1,2}$, $K_2+K_1$ and $\bar{K}_3$
to be respectively $1$, $0$, $1$ and $0$.

Finaly, the Shrikhande graph has two different representations as an
$8$-bicirculant with respective symbols being  
$[\{ \pm 1 \}, \{ \pm 3 \}, \{ 0,\pm 1,4 \}]$
and
$[\{ \pm 1, \pm 2\}, \{ \pm 2, \pm 3 \}, \{ 1,3 \}]$.
It is not $3$-isoregular as it admits induced
subgraphs $K_{1,2}$ of different valencies, namely $0$ and $1$.
}
\end{example}

\subsection{Tricirculants}
\label{sub:tri}
\indent

Strong regularity of tricirculants has also been a matter of research interest.
In \cite{KMMS09} certain structural results for  such graphs were obtained. In short, based on arithmetic conditions on the valency of subgraphs induced on the three orbits of a $(3,n)$-semiregular automorphism  two types of such graphs were identified (up to taking the complement).  
The first type is characterized by the fact that the induced subgraphs
on the three orbits are of the same valency, while in the second type
this is not the case.

\begin{proposition}
\label{pro:tricirculants}
{\rm \cite[Theorem 5.3]{KMMS09}}
Let $\G$ denote a strongly regular $n$-tricirculant with parameters $(3n,k,\lambda,\mu)$ where  either $n$ is a prime number or $n$ is coprime to $6\sqrt{(\lambda-\mu)^2+4(k-\mu)}$. 
Assume further that there is at least one edge in $\G$ with both endvertices
in the same orbit of the $(3,n)$-semiregular automorphism $\sigma$ of $\G$ 
and that the number of edges of $\G$ 
with both endvertices  in the same orbit of  
$\sigma$ in $\G$ is smaller than or equal to the number of edges 
with both endvertices  in the same orbit of $\sigma$ in 
$\overline{\G}$. Then there exists an integer $s$, such that either
$$
 (3n,k,\lambda,\mu)=(3(12 s^2 + 9s + 2), (4s + 1)(3s + 1), s(4s + 3), s(4s + 1)),
$$
or
$$
  (3n,k,\lambda,\mu)=(3(3 s^2 - 3s + 1), s(3s - 1), s^2 + s - 1, s^2).
$$
\end{proposition}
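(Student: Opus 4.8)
The plan is to marry the ``symbol'' description of a multicirculant with the character (Fourier) decomposition of its adjacency matrix, letting the arithmetic hypothesis on $n$ pin down a rigid structure. First I would fix a $(3,n)$-semiregular automorphism $\sigma$ of $\G$ with orbits $V_0,V_1,V_2$, identify each $V_a$ with $\ZZ_n$ so that $\sigma$ acts as $j\mapsto j+1$, and record the symbol: inverse-closed sets $S_a\subseteq\ZZ_n\hsh$ (with $d_a:=|S_a|$ the valency of the circulant on $V_a$) and connection sets $T_{ab}\subseteq\ZZ_n$ with $T_{ba}=-T_{ab}$, where $(a,i)\sim(b,j)$ iff $j-i\in T_{ab}$ and $T_{aa}:=S_a$. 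The orbit partition is equitable, so its quotient matrix $(|T_{ab}|)_{a,b}$ --- symmetric $3\times 3$, with all row sums $k$ --- has its eigenvalues among $\{k,r,s\}$, where $r>s$ are the two eigenvalues of $\G$ distinct from $k$; write $\Delta:=r-s=\sqrt{(\lambda-\mu)^2+4(k-\mu)}$, and note that ``$n$ coprime to $6\Delta$'' already forces $\Delta\in\ZZ$, so $\G$ is not a conference graph (that case being vacuous here, as then $\Delta\notin\ZZ$).

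Next I would decompose $A=A(\G)$ along the characters of $\ZZ_n$: for each $h\in\ZZ_n$ one gets a Hermitian $3\times 3$ matrix $M(h)$ with $(a,a)$-entry $\widehat{S_a}(h)$ and $(a,b)$-entry $\widehat{T_{ab}}(h)$, where $\widehat{X}(h)=\sum_{x\in X}\zeta^{hx}$ and $\zeta=e^{2\pi i/n}$, and $\mathrm{spec}(A)$ is the disjoint union of the $\mathrm{spec}(M(h))$. From $(A-rI)(A-sI)=\mu J$ and the fact that $J$ annihilates every non-principal character space, one gets $(M(h)-rI)(M(h)-sI)=0$ for $h\neq 0$; hence each such $M(h)$ has eigenvalue list one of $(r,r,r)$, $(r,r,s)$, $(r,s,s)$, $(s,s,s)$. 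For list $(r,s,s)$ the matrix $M(h)-sI$ is positive semidefinite of rank one, say $M(h)-sI=\Delta\,vv^{*}$ with $\|v\|=1$, so $\widehat{S_a}(h)\ge s$ for all $a$, $|\widehat{T_{ab}}(h)|^{2}=(\widehat{S_a}(h)-s)(\widehat{S_b}(h)-s)$, and $\widehat{T_{01}}(h)\widehat{T_{12}}(h)\widehat{T_{20}}(h)=\prod_a(\widehat{S_a}(h)-s)$ (real and nonnegative); list $(r,r,s)$ is dual via $rI-M(h)$ of rank one, while $(r,r,r)$ and $(s,s,s)$ force $M(h)$ scalar.

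Then I would prove that the eigenvalue list of $M(h)$ is the same for all $h\neq 0$ (after possibly replacing $\G$ by $\overline{\G}$). The Galois group $\mathrm{Gal}(\QQ(\zeta_n)/\QQ)\cong\ZZ_n^{*}$ acts by $M(h)\mapsto M(th)$ (Frobenius entrywise); since $\Delta\in\ZZ$, the characteristic polynomial $(x-r)^{a}(x-s)^{3-a}$ of $M(h)$ lies in $\ZZ[x]$, so the list --- and the integer $\mathrm{tr}\,M(h)=\sum_a\widehat{S_a}(h)$ --- is constant on each $\ZZ_n^{*}$-orbit of $\ZZ_n\hsh$. When $n$ is prime that orbit is all of $\ZZ_n\hsh$, so $\widehat{g}$ is constant on $\ZZ_n\hsh$ for the symmetric function $g(j):=\#\{a:j\in S_a\}\in\{0,1,2,3\}$; inverting, $g$ is constant there, and then the hypothesis that $\G$ has a within-orbit edge and that within-orbit edges of $\G$ are at most those of $\overline{\G}$ (i.e.\ $\sum d_a\le\tfrac{3(n-1)}{2}$) forces $g\equiv 1$ --- that is, $\sum d_a=n-1$, $S_0,S_1,S_2$ partition $\ZZ_n\hsh$, and every $M(h)$, $h\neq 0$, has list $(r,s,s)$. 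When merely $\gcd(n,6\Delta)=1$, I would run the same argument on each orbit $\{h:\gcd(h,n)=d\}$, $d\mid n$, $d<n$ --- equivalently, on the pushforwards of $g$ to $\ZZ_{n/d}$: here $2,3\nmid n$ controls the small subgroups and the action of $\langle-1\rangle$, and $\gcd(n,\Delta)=1$ is what forbids the integer values $\widehat{S_a}(h)$, reduced modulo a prime $p\mid n$, from being compatible with a mixed distribution of the two nonscalar lists or with a scalar block; the conclusion is again a partition of $\ZZ_n\hsh$ by the $S_a$ and a uniform list $(r,s,s)$. \emph{This is the crux}: converting coprimality to $6\Delta$ into an honest proof of this uniformity (in particular over composite $n$) is the whole arithmetic heart of the statement.

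Finally I would extract the parameters. Summing $\mathrm{tr}\,M(h)=r+2s$ over $h\neq 0$ and using $\sum_h\widehat{S_a}(h)=0$ and $g\equiv 1$ gives $\Delta=3(\lambda-\mu)+2$, while $\sum d_a=n-1$ is the trace of the quotient matrix, hence equals $k+\theta_1+\theta_2$ with $\theta_1,\theta_2\in\{r,s\}$. The dichotomy in the statement is precisely whether $\theta_1=\theta_2$ --- then the quotient matrix, being symmetric with equal row sums, equals $\theta_1 I$ plus a scalar multiple of $J$, so $d_0=d_1=d_2$ and the off-diagonal $|T_{ab}|$ coincide --- or $\theta_1\neq\theta_2$, in which case the $d_a$ need not all agree. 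In each branch one is left with a Diophantine system in $(k,\lambda,\mu,n)$: $\Delta=3(\lambda-\mu)+2$, the trace relation, $\Delta^{2}=(\lambda-\mu)^{2}+4(k-\mu)$, $k(k-\lambda-1)=\mu(3n-1-k)$, plus integrality and $\Delta\mid 2k+(3n-1)(\lambda-\mu)$. Solving it --- and reparametrising by $\Delta=3s-1$ in the $\theta_1=\theta_2$ branch and $\Delta=6s+2$ in the $\theta_1\neq\theta_2$ branch --- returns exactly the two parameter families of the statement; the remaining possibility $\theta_1=\theta_2=r$ leads to a nonsquare discriminant and no admissible parameters. This last step is routine but lengthy; the obstacle, as noted, is the uniformity of step three.
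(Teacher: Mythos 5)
First, note that the paper does not prove this proposition at all: it is quoted verbatim from \cite[Theorem 5.3]{KMMS09}, so the only available comparison is with that cited proof, which (like your sketch) proceeds through the symbol of the tricirculant and eigenvalue/character-sum arguments. Your general framework (equitable quotient, the $3\times 3$ blocks $M(h)$ with $(M(h)-rI)(M(h)-sI)=0$ for $h\neq 0$, Galois stability of the eigenvalue lists, then a Diophantine extraction of the parameters) is the right sort of machinery. But as a proof the proposal has a genuine gap, and you say so yourself: the step in which the hypothesis ``$n$ prime or $n$ coprime to $6\sqrt{(\lambda-\mu)^2+4(k-\mu)}$'' is converted into the rigid structure --- a uniform eigenvalue list for all $h\neq 0$ and the conclusion that $S_0,S_1,S_2$ partition $\ZZ_n\hsh$ --- is only described as something you ``would run'' orbit by orbit for composite $n$, with the roles of $2,3\nmid n$ and $\gcd(n,\Delta)=1$ asserted rather than demonstrated. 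That is exactly where the arithmetic content of the theorem lives (it is the analogue of the group-ring/Fourier analysis in \cite{deRJ92,LM93,KMMS09}), so without it the argument establishes the conclusion at best for $n$ prime.

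Even in the prime case there are unjustified jumps. From $g\equiv 1$ and the constancy of $\mathrm{tr}\,M(h)$ on $\ZZ_n\hsh$ you conclude that every $M(h)$, $h\neq 0$, has list $(r,s,s)$; but constant trace $-1$ is equally compatible with the list $(r,r,s)$ and $2r+s=-1$ (with $r\ge 0$, $s\le -1$ this is not absurd), and eliminating it requires an extra argument, e.g.\ a global multiplicity count or passing to the complement --- and the latter is delicate because the within-orbit edge hypothesis is not complement-symmetric. Likewise, when $n$ is prime the hypothesis does not force $\Delta\in\ZZ$, so the conference-graph case is not ``vacuous'' in that branch and must be excluded separately (e.g.\ by noting that irrational $r,s$ are incompatible with a $3\times 3$ block structure over $\QQ(\zeta_n)$). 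Finally, the closing Diophantine step, which is what actually produces the two parameter families and rules out the branch $\theta_1=\theta_2=r$, is only declared ``routine but lengthy''; it is doable (I checked one branch reproduces the family $(3(3s^2-3s+1),s(3s-1),s^2+s-1,s^2)$), but it is part of the statement and should be carried out. In sum: correct strategy, but the crux you flag plus these case exclusions are missing, so the proposal is an outline rather than a proof.
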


In \cite{KMMS09}  constructions of graphs 
were given for $n \in  \{5,7,19\}$.
None of these graphs is $3$-isoregular, as checked by Magma \cite{Mag}.   
A brief explanation is given below for the case $n=5$. 

\begin{example}
\label{ex:15}
{\rm 
The graph on $15$ vertices is
the point graph of the generalized quadrangle $GQ(2,2)$.
Its  valency is $6$. Alternatively, it can  be thought of as the complement of the line graph $L(K_6)$ of the complete graph on $6$ vertices, or, as a third possibility,  
as the line graph of the Petersen graph with added triangles inside each of the $5$  blocks of imprimitivity (of size $3$).  

With regards to the number of neighbors,
there are two kinds of independent sets with three vertices,
respectively, with one  neighbor and no neighbors.
Using the ``line  graph of the Petersen graph'' representation,
this graph  is a $3$-fold cover of $K_5$ with triangles inside the fibres,
which are blocks of imprimitivity of the group $SL(2,4)$ acting on the projective line $PG(1,4) = \{\infty, 0,1,w,w^2\}$ over $GF(4)$,
where $1+w =w^2$. 
The vertex set is identified with
$$
\{(x,i) \mid x \in PG(1,4), i \in \ZZ_3\},
$$
and the voltages satisfy the following conditions:
voltage of an edge in $K_5$ with one endvertex
$\infty$ is $id \in D_{6}$, while the other voltages 
are reflections $\tau_i = (i) (i-1,i+1)$,  
$i \in \ZZ_3$. More precisely, the voltage between
blocks 
$0$ and $w^i$ is  $\tau_i$ and between blocks $w^i$ and $w^j$ is 
$\tau_k$  if and only if $w^k = w^i + w^j$.
The representatives of the two types of 
independent sets of $3$ vertices are $\{ (\infty,0), (0,1), (1,1)\}$ and
$\{ (\infty,0), (0,1), (w,2)\}$. While the latter has no neighbor, 
$(\infty,1)$ is the neighbor of the first set.
}
\end{example}


\section{Locally $3$-isoregular graphs}
\label{sec:locally}
\indent

In this section we introduce the concept of locally $3$-isoregular graphs. 
Let $\G$ denote a finite, simple, connected graph. We say that 
a pair of distinct vertices $(x,y)$ of $\G$ is  
{\em $3$-isoregular} if and only if the valency of any set $T = \{x,y,z\}$, $z \neq x,y$,
depends solely on the isomorphism type of the subgraph induced on $T$.
Furthermore, we say that $\G$ is {\em locally $3$-isoregular at vertex} $x$, if 
there exist  an edge $(x,y)$ and a non-edge $(x,z)$, both $3$-isoregular.
And moreover, we say that $\G$ is {\em locally $3$-isoregular}
if there exists a vertex $x$ of $\G$ such that $G$ is locally
$3$-isoregular at $x$.
Clearly, in a  3-isoregular graph all edges and non-edges are $3$-isoregular.
Note that if $\G$ has a 3-isoregular edge $(x,y)$, then its complement $\overline{\G}$ has a 3-isoregular non-edge $(x,y)$. It follows that $\G$ is locally $3$-isoregular at vertex $x$ if and only if $\overline{\G}$ is locally $3$-isoregular at $x$.

Assume now that the edge $(x,y)$ of $\G$ is $3$-isoregular. 
Then there exist non-negative integers 
$Q$, $R$, $W$ which are the valencies  
of a set $\{x,y,z\}$, $z\in V(\G)$, depending, respectively, on whether the subgraph induced on $\{x,y,z\}$ is isomorphic to the triangle $K_3$, the 
$2$-claw $K_{1,2}$, or the graph $K_2 + K_1$. We say that $Q$, $R$, $W$ are {\em parameters associated with} the edge $(x,y)$. The next result
gives two relations on the parameters $Q$, $R$ and $W$.

\begin{proposition}
	\label{pro:3iso-eq1}
Let $\G$ be a strongly regular graph with parameters $(n,k,\lambda,\mu)$
and let $(x,y)$ be a $3$-isoregular edge of $\G$ with $Q, R, W$ as the associated parameters. Then the following (i)-(ii) hold:
	\begin{itemize}
\itemsep=0pt
		\item[(i)]
		 $
		   \lambda(\lambda-Q-1)=R(k-\lambda-1),
		 $
		 \item[(ii)]
		 $
		 \lambda\mu(k-2\lambda+Q)=W(k-\mu)(k-\lambda-1).
		 $
	\end{itemize}
Consequently, $W(k-\mu)=\mu(\lambda-R)$.
\end{proposition}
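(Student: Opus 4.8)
The plan is to obtain both relations by double-counting, fixing the $3$-isoregular edge $(x,y)$ and counting incidences between the ``common neighbourhood'' structure of $x,y$ and a third vertex $z$ of prescribed induced type. Write $\Lambda = N(x)\cap N(y)$, so $|\Lambda| = \lambda$ since $\G$ is strongly regular. For part (i), I would count pairs $(z,w)$ with $z\in\Lambda$, $w\in\Lambda$, $z\sim w$, in two ways. On one hand, this is the number of edges inside the induced subgraph on $\Lambda$, counted with both orientations; on the other hand, for each $z\in\Lambda$ the triple $\{x,y,z\}$ induces a $K_3$, so its valency is $Q$, meaning $z$ has exactly $Q$ neighbours in $N(x)\cap N(y) = \Lambda$, i.e.\ exactly $Q$ neighbours inside $\Lambda$. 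Hence the induced subgraph on $\Lambda$ is $Q$-regular on $\lambda$ vertices. To get relation (i) I then count, for a fixed $z_0\in\Lambda$, the triples $\{x,z_0,w\}$ with $w\sim x$, $w\sim z_0$, $w\neq y$: there are $\lambda - 1 - Q$ such $w$ lying outside $\Lambda$ (these give induced $K_{1,2}$ on $\{x,y,w\}$ when paired back with $y$) and comparing with the count of $2$-claws on the edge $(x,y)$ expressed through $R$ and through the strongly regular parameters yields $\lambda(\lambda-Q-1)=R(k-\lambda-1)$; the right-hand side counts pairs $(z,w)$ with $z\in\Lambda$, $w\in N(x)\setminus(\Lambda\cup\{y\})$, $w\sim z$, using that each such $w$ has $\mu$... more carefully, each $w\in N(x)\setminus N(y)$, $w\neq$ trivial, has $\lambda$ common neighbours with $x$, of which $R$ lie in $\Lambda$ by $3$-isoregularity of the $K_{1,2}$-type, while there are $k-1-\lambda$ such $w$; equating the two counts of $\{w : w\sim z, w\sim x, w\not\sim y\}$ summed over $z\in\Lambda$ gives (i).

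For part (ii) I would run the analogous double count but now with the third ``anchor'' vertex chosen in $N(x)\setminus N(y)$ (a $K_2+K_1$-type neighbour of the edge), which is why the parameter $W$ and the factor $k-\mu$ enter. Concretely: count quadruples, or rather count pairs $(z,u)$ with $z\in\Lambda$, $u\in N(x)\setminus (N(y)\cup\{y\})$, and $z\sim u$. From the $z$-side, each $z\in\Lambda$ lies in a $K_3$ with $\{x,y\}$, so $z\sim x$, $z\sim y$, and the number of neighbours of $z$ that are adjacent to $x$ but not to $y$ can be read off from $\lambda = |N(z)\cap N(x)|$ split according to adjacency to $y$; this contributes the left-hand side $\lambda\mu(k-2\lambda+Q)$ after also using $|N(z)\cap N(y)|$ and the identity $k = |N(z)| $ decomposed over the four cells determined by adjacency to $x$ and to $y$. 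From the $u$-side, $\{x,y,u\}$ induces $K_2+K_1$, so it has valency $W$, meaning $u$ has $W$ common neighbours with both $x$ and $y$, i.e.\ $W$ neighbours in $\Lambda$; summing $W$ over the $k-\mu$ choices of $u$ (using that $y\notin N(x)$... rather $u$ ranges over the $k-\mu$ vertices adjacent to $x$ but not to $y$ other than the relevant exceptions, which the strongly regular count pins down as exactly $k-\mu$ up to the known adjustment) gives the right-hand side $W(k-\mu)(k-\lambda-1)$ after folding in the factor $(k-\lambda-1)$ coming from how many $u$ each fixed configuration sees. I would be careful here to handle the off-by-one terms ($y$ itself, $x$ itself) consistently; the stated clean form suggests these wash out.

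Finally, the consequence $W(k-\mu)=\mu(\lambda-R)$ should drop out algebraically from (i) and (ii) together with the standard strongly regular identity \eqref{eq:srg}, namely $k(k-\lambda-1)=\mu(n-1-k)$, and possibly the edge-count relation for $\lambda$. The idea is to eliminate the quadratic expression $k-2\lambda+Q$: from (i) one solves for $\lambda - Q - 1 = R(k-\lambda-1)/\lambda$, hence $k - 2\lambda + Q = (k-\lambda-1) - (\lambda - Q - 1) = (k-\lambda-1)\bigl(1 - R/\lambda\bigr) = (k-\lambda-1)(\lambda - R)/\lambda$; substituting into (ii) gives $\lambda\mu\cdot(k-\lambda-1)(\lambda-R)/\lambda = W(k-\mu)(k-\lambda-1)$, and cancelling the common nonzero factor $(k-\lambda-1)$ (nonzero for a non-trivial strongly regular graph, since $\G$ is connected and not complete) yields exactly $\mu(\lambda - R) = W(k-\mu)$.

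The main obstacle I anticipate is not the structure of the argument but the bookkeeping of the ``exceptional'' vertices ($x$, $y$ themselves, and the distinction between $z$ adjacent or not to the anchor vertex) in the double counts for (i) and (ii): getting precisely the constants $\lambda - Q - 1$, $k - 2\lambda + Q$, $k-\mu$ and $k-\lambda-1$ — rather than versions shifted by $1$ — requires laying out the partition of $V(\G)\setminus\{x,y\}$ into the cells $N(x)\cap N(y)$, $N(x)\setminus N(y)$, $N(y)\setminus N(x)$, and the rest, and tracking each triple's induced type carefully. Once that partition is fixed, both relations are routine incidence counts, and the final consequence is pure algebra as sketched above.
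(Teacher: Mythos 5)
Your part (i) is correct and is essentially the paper's own argument: the paper counts edges between $N(x)\cap N(y)$ and the cell of neighbours of $y$ not adjacent to $x$, while you use the symmetric cell $N(x)\setminus(N(y)\cup\{y\})$ of size $k-\lambda-1$, which is equivalent. Your derivation of the consequence $W(k-\mu)=\mu(\lambda-R)$ from (i) and (ii) is also the intended algebra, except that solving (i) for $\lambda-Q-1$ divides by $\lambda$; the case $\lambda=0$ does occur for graphs with a $3$-isoregular edge (Petersen, Clebsch) and must be noted separately, though it is immediate: then (i) forces $R=0$ and (ii) forces $W(k-\mu)=0$, so the identity still holds.

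Part (ii), however, contains a genuine error rather than a bookkeeping issue. You anchor the count at $u\in N(x)\setminus(N(y)\cup\{y\})$ and assert that $\{x,y,u\}$ induces $K_2+K_1$ with valency $W$. It does not: since $u\sim x$ and $u\not\sim y$, that triple induces $K_{1,2}$, so each such $u$ has $R$ (not $W$) neighbours in $\Lambda=N(x)\cap N(y)$, and the double count you describe simply reproduces (i). The parameter $W$ is attached to vertices adjacent to \emph{neither} $x$ nor $y$, so the correct count for (ii) is of edges between $\Lambda$ and the cell $D^2_2$ of vertices at distance $2$ from both $x$ and $y$. Each $z\in\Lambda$ has exactly $k-2\lambda+Q$ neighbours in $D^2_2$ (its $k$ neighbours split as $x$, $y$, $Q$ inside $\Lambda$, and $\lambda-Q-1$ in each of the two mixed cells), each $u\in D^2_2$ has exactly $W$ neighbours in $\Lambda$, and $|D^2_2|=n-2k+\lambda=(k-\mu)(k-\lambda-1)/\mu$ by \eqref{eq:srg}; equating $\lambda(k-2\lambda+Q)=W\,|D^2_2|$ and clearing the denominator $\mu$ yields (ii). In particular your factor $k-\mu$ is misattributed: since $x\sim y$, the set $N(x)\setminus(N(y)\cup\{y\})$ has size $k-\lambda-1$, and $k-\mu$ enters only through $|D^2_2|$ (as does the factor $\mu$, which your sketch never accounts for). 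The discrepancy is not an ``off-by-one that washes out'' but the choice of the wrong cell, and as written your argument cannot produce relation (ii).
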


\begin{proof}
 For a vertex $z$ of $\G$ and for a non-negative integer $i$, we set $\G_i(z)$ to denote the set of vertices of $\G$ at distance $i$ from $z$. Furthermore, for $0 \le i,j \le 2$, we set 
 $$
   D_j^i = D_j^i(x,y) = \G_i(x) \cap \G_j(y).
 $$ 
 Note that $D^0_1=\{x\}$, $D^1_0=\{y\}$, $D_0^0=\emptyset$, and that we have
$$
  |D_1^1|=\lambda, \quad |D^1_2|=|D^2_1|=k-\lambda-1, \quad |D^2_2|=\frac{(k-\mu)(k-\lambda-1)}{\mu}.
 $$

 In order to prove (i), observe that by the definition of parameter $R$, every vertex $z \in D^2_1$ has exactly $R$ neighbors in $D_1^1$, and consequently exactly $\lambda-Q-1$ neighnours in $D^2_1$. Now if $D^1_1=\emptyset$, then $R=\lambda=0$, and the above equality holds. Assume therefore that $D^1_1 \ne \emptyset$. Then, by the definition of parameter 
$Q$, every vertex $z \in D^1_1$ has exactly $Q$ neighbors in $D_1^1$.  Now counting the number of edges between $D^1_1$ and $D^2_1$ in two different ways (taking into account the above formulae for the cardinalities of these sets), we get the desired equality.
 
As for (ii), observe that by the definition of parameter $W$, every vertex $z \in D^2_2$ has exactly $W$ neighbors in $D_1^1$. Similarly as above, counting the number of edges between $D^1_1$ and $D^2_2$ in two different ways (taking into account the above formulae for the cardinalities of these sets), we get the desired equality.

The last claim of the Proposition no follows immediately from (i), (ii) above.
\end{proof}

Assume now that a non-edge $(x,z) \; (x \ne z)$ of $\G$ is $3$-isoregular.
Then there exist non-negative integers 
$R'$, $W'$, $V$  which are the valencies  
of a set $\{x,z,u\}$, $u\in V(\G)$, depending, respectively, on whether the subgraph induced on $\{x,z,u\}$ is isomorphic to the $2$-claw $K_{1,2}$, the graph $K_2 + K_1$, or the graph $3K_1$. 
In this case we say that $R',W',V$ are {\em parameters associated with}  the non-edge $(x,z)$. The next result
gives two relations on the parameters $R', W'$ and $V$.

\begin{proposition}
	\label{pro:3iso-eq2}
Let $\G$ be a strongly regular graph with parameters $(n,k,\lambda,\mu)$ 
and let $(x,z)$ be a $3$-isoregular non-edge of $\G$ with $R', W', V$ as
the associated parameters. Then the following (i)-(ii) hold:
	\begin{itemize}
\itemsep=0pt
		\item[(i)]
		$
		\mu (\lambda - R')=(k-\mu)W',
		$
		\item[(ii)]
		$
		\mu(k-2-2\lambda+R') = V\Big(\frac{k(k-\lambda-1)}{\mu}-k+\mu-1 \Big).
		$
	\end{itemize}
\end{proposition}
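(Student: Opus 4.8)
The plan is to adapt, essentially verbatim, the double-counting argument from the proof of Proposition~\ref{pro:3iso-eq1}, now working with the distance partition of $\G$ relative to the ordered non-edge $(x,z)$ rather than an edge. As in that proof, let $\G_i(v)$ temporarily denote the \emph{set} of vertices at distance $i$ from $v$, and for $0\le i,j\le 2$ put $D^i_j = D^i_j(x,z) = \G_i(x)\cap\G_j(z)$. Since $\G$ is (non-trivial, hence connected of diameter $2$) strongly regular and $(x,z)$ is a non-edge, we have $D^0_2=\{x\}$, $D^2_0=\{z\}$, and the usual count in a strongly regular graph gives
$$
  |D^1_1|=\mu,\qquad |D^1_2|=|D^2_1|=k-\mu,\qquad |D^2_2|=n-2-2k+\mu=\frac{k(k-\lambda-1)}{\mu}-k+\mu-1,
$$
where the last identity is just a rewriting of~\eqref{eq:srg}. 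Every vertex $u\notin\{x,z\}$ lies in exactly one cell $D^i_j$ with $i,j\in\{1,2\}$, and the subgraph induced on $\{x,z,u\}$ is $K_{1,2}$ when $u\in D^1_1$, it is $K_2+K_1$ when $u\in D^1_2\cup D^2_1$, and it is $3K_1$ when $u\in D^2_2$. Since $D^1_1$ is exactly the set of common neighbours of $x$ and $z$, in each case the valency of $\{x,z,u\}$ is the number of neighbours of $u$ lying in $D^1_1$; hence $3$-isoregularity of $(x,z)$ says that every vertex of $D^1_1$ has $R'$ neighbours in $D^1_1$, every vertex of $D^1_2\cup D^2_1$ has $W'$ neighbours in $D^1_1$, and every vertex of $D^2_2$ has $V$ neighbours in $D^1_1$.

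For part~(i) I would count the edges between $D^1_1$ and $D^1_2$ in two ways. Counting from the $D^1_2$ side gives $|D^1_2|\,W'=(k-\mu)W'$. Counting from the $D^1_1$ side, fix $v\in D^1_1$; since $v\sim x$ the $\lambda$ common neighbours of $v$ and $x$ all lie in $\G_1(x)=D^1_1\cup D^1_2$, and exactly $R'$ of them lie in $D^1_1$ (these are the common neighbours of $x$, $z$ and $v$), so $v$ has $\lambda-R'$ neighbours in $D^1_2$ and this side contributes $\mu(\lambda-R')$. Equating the two counts yields~(i).

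For part~(ii) I would count the edges between $D^1_1$ and $D^2_2$ in two ways. Counting from the $D^2_2$ side gives $|D^2_2|\,V$, which is precisely the right-hand side of~(ii). Counting from the $D^1_1$ side, fix $v\in D^1_1$ and sort its $k$ neighbours: two of them are $x$ and $z$; exactly $R'$ lie in $D^1_1$; and, applying the argument of the previous paragraph to the edge $vx$ and, symmetrically, to the edge $vz$, exactly $\lambda-R'$ lie in $D^1_2$ and exactly $\lambda-R'$ lie in $D^2_1$. Hence $v$ has $k-2-R'-2(\lambda-R')=k-2-2\lambda+R'$ neighbours in $D^2_2$, and this side contributes $\mu(k-2-2\lambda+R')$. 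Equating the two counts yields~(ii).

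I do not expect a genuine obstacle: the whole argument is a routine two-way edge count, and the only thing one must pin down carefully is the dictionary between the isomorphism type of $\{x,z,u\}$, the cell $D^i_j$ containing $u$, and the corresponding neighbour count inside $D^1_1$. The single bit of bookkeeping worth a sentence is the degenerate case: if $D^2_2=\emptyset$ (which happens, for instance, for $C_5$), then $V$ is vacuous but the right-hand side of~(ii) equals $0$ and the edge count forces the left-hand side to vanish as well, so the identity still holds; non-triviality of $\G$ guarantees $\mu\ge 1$, i.e.\ $D^1_1\ne\emptyset$, so no comparable degeneracy arises in~(i).
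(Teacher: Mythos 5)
Your proposal is correct and is essentially the paper's own argument: the paper simply declares the proof analogous to that of Proposition~\ref{pro:3iso-eq1}, i.e.\ the same double counting of edges between $D^1_1(x,z)$ and the other cells of the distance partition relative to the non-edge, which is exactly what you carry out (with the cell sizes $\mu$, $k-\mu$, and $\frac{k(k-\lambda-1)}{\mu}-k+\mu-1$ and the neighbour counts $R'$, $W'$, $V$ identified correctly).
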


\begin{proof}
The argument is analogous to the one used in the 
proof of Proposition \ref{pro:3iso-eq1}. 
\end{proof}

\begin{proposition}
	\label{pro:3iso-eq3}
Let $\G$ be a strongly regular graph with parameters $(n,k,\lambda,\mu)$. Further let $\G$ be locally 3-isoregular at vertex $x$. Let $y$ be a neighbor of $x$ such that $(x,y)$ is $3$-isoregular, and let $Q,R,W$ be the associated parameters. Let $z$ be a non-neighbor of $x$ such that $(x,z)$ is $3$-isoregular, and let $R',W',V$ be the associated parameters. Then $R=R'$ and $W=W'$.
\end{proposition}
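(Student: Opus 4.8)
The plan is to extract both equalities from a single well-chosen triple, namely $\{x,y,z\}$ itself, and then to use the relations already recorded in Propositions~\ref{pro:3iso-eq1} and~\ref{pro:3iso-eq2} to trade one equality for the other. I would distinguish two cases according to whether $y$ and $z$ are adjacent. Suppose first that $y\sim z$. Then, since $x\sim y$ and $x\not\sim z$, the subgraph induced on $\{x,y,z\}$ is a $2$-claw $K_{1,2}$ with centre $y$. Viewing $\{x,y,z\}$ as a triple through the $3$-isoregular edge $(x,y)$: the third vertex $z$ is adjacent to exactly one of $x,y$, so the valency of $\{x,y,z\}$ is $R$. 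Viewing the same triple through the $3$-isoregular non-edge $(x,z)$: the third vertex $y$ is adjacent to both $x$ and $z$, so this valency is $R'$. Hence $R=R'$. Suppose instead that $y\not\sim z$; then the induced subgraph on $\{x,y,z\}$ is $K_2+K_1$, its valency equals $W$ when read through the edge $(x,y)$ and equals $W'$ when read through the non-edge $(x,z)$, so in this case $W=W'$.

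To finish, recall that a non-trivial strongly regular graph has $\mu\ge 1$ and $\mu<k$ (if $\mu=k$ then $\G$ is complete multipartite, contradicting non-triviality). By Proposition~\ref{pro:3iso-eq1} we have $W(k-\mu)=\mu(\lambda-R)$, and by Proposition~\ref{pro:3iso-eq2}(i) we have $W'(k-\mu)=\mu(\lambda-R')$; subtracting gives $(W-W')(k-\mu)=\mu(R'-R)$. Since $k-\mu\ne 0$ and $\mu\ne 0$, the equalities $R=R'$ and $W=W'$ are equivalent, so in the case $y\sim z$ the identity $R=R'$ forces $W=W'$, and in the case $y\not\sim z$ the identity $W=W'$ forces $R=R'$; either way both hold.

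There is no real analytic difficulty here; the only thing to get right is the bookkeeping of isomorphism types — in particular that ``$z$ adjacent to exactly one of $x,y$'' is the $K_{1,2}$ case contributing the parameter $R$ on the edge side, while ``$y$ adjacent to both $x$ and $z$'' is the $K_{1,2}$ case contributing $R'$ on the non-edge side — together with checking the non-triviality facts $\mu\ne 0$ and $k\ne\mu$ that licence the final cancellation.
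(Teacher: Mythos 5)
Your proof is correct and follows essentially the same route as the paper's: the paper also splits on whether $y\sim z$, reads the common triple $\{x,y,z\}$ through both the edge $(x,y)$ and the non-edge $(x,z)$ to get one equality directly, and then transfers it to the other via Proposition~\ref{pro:3iso-eq2}(i) together with the consequence $W(k-\mu)=\mu(\lambda-R)$ of Proposition~\ref{pro:3iso-eq1}. The only difference is cosmetic: you make explicit the non-degeneracy facts $\mu\neq 0$ and $k\neq\mu$ (guaranteed by the paper's standing non-triviality assumption) that the paper uses implicitly when cancelling.
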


\begin{proof}
	For $0 \le i,j \le 2$ we set 
	$$
	  D^i_j = D^i_j(x,z) = \G_i(x) \cap \G_j(z).
	$$
	Note that we have
	$$
	  |D^1_1| = \mu, \quad |D^1_2| = |D^2_1| = k-\mu, \quad |D^2_2|=\frac{k(k-\lambda-1)}{\mu}-k+\mu-1.
	$$
	Observe that as $(x,z)$ is 3-isoregular, every vertex $v \in D^1_1$ has exactly $R'$ neighbors in $D^1_1$, every vertex $v \in D^1_2 \cup D^2_1$ has exactly $W'$ neighbors in $D^1_1$, and every vertex $v \in D^2_2$ has exactly $V$ neighbors in $D^1_1$. We split the proof into two cases, depending on weather $y$ is a neighbor of $z$ (that is, $y \in D^1_1$), or a non-neighbor of $z$ (that is, $y \in D^1_2$). 
	
	Assume first that $y \in D^1_1$. By the comments above, $y$ must have $R'$ neighbors in $D^1_1$. However, since $(x,y)$ is 3-isoregular, $x,y,z$ have $R$ common neighbors, implying that $R=R'$. By Proposition \ref{pro:3iso-eq2}(i) we therefore have
	$$
	   \mu(\lambda-R)=(k-\mu)W'.
	$$
	The equality $W=W'$ now follows from Proposition \ref{pro:3iso-eq1}.
	
If $y \in D^1_2$ then we have that $W=W'$ 
using an argument analogous to the one that was used in the previous
case to obtain  $R=R'$. By Proposition \ref{pro:3iso-eq2}(i) we therefore have
	$$
	\mu(\lambda-R')=(k-\mu)W,
	$$
and again using Proposition~\ref{pro:3iso-eq1} we get $R=R'$.
\end{proof}


\section{Non-existence of local $3$-isoregular strongly regular $n$-bicirculants, $n$ odd}
\label{sec:bicirc}
\indent

Let $\G$ denote a strongly regular $n$-bicirculant of order $2n$, $n$ odd. Recall that, by Theorem~\ref{the:bicirc}, parameters of $\G$ satisfies 
$$
  (2n,k,\lambda,\mu) = (2(2m^2+2m+1),m(2m+1),m^2-1,m^2),
$$
or 
$$
  (2n,k,\lambda,\mu) = (2(2m^2+2m+1)(m+1)(2m+1),m(m+2),(m+1)^2)
$$
for some positive integer $m$ 
(see also \cite{deRJ92,DM88,HW56,HW64}). In this section we show that such a graph is not locally $3$-isoregular at any vertex $x$.  Note that if $\G$ has parameters $(2(2m^2+2m+1),m(2m+1),m^2-1,m^2)$, then $\overline{\G}$ has parameters $(2(2m^2+2m+1),(m+1)(2m+1),m(m+2),(m+1)^2)$.

If $m=1$, then the above two parameter sets correspond to the Petersen graph and its complement. Note that every edge of the Petersen graph is 3-isoregular (and so every non-edge of its complement is also 3-isoregular). On the other hand, none of the non-edges of the Peterson graph is $3$-isoregular (and so none of the edges of its complement is 3-isoregular). Therefore, for the rest of this section we may assume $m \ge 2$. 

\begin{proposition}
	\label{prop:bicirc1}
	Let $\G$ be a strongly regular bicirculant with parameters $(2(2m^2+2m+1),m(2m+1),m^2-1,m^2)$, where $m \ge 2$, and let $(x,y)$ be
a 3-isoregular edge of $\G$ with
the associated parameters $Q,R,W$. Then $m$ is odd and 
		$$
		  Q= \frac{m^2-m-4}{2}, \quad R=\frac{m^2-1}{2} , \quad W=\frac{m(m-1)}{2}.
		$$
\end{proposition}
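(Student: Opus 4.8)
The plan is to combine the two relations from Proposition~\ref{pro:3iso-eq1} with a counting argument coming from the bicirculant structure of $\G$ in order to pin down the parameters $Q,R,W$ completely (and in particular force $m$ to be odd). First I would substitute the strongly regular parameters $(n,k,\lambda,\mu)=(2m^2+2m+1,\,m(2m+1),\,m^2-1,\,m^2)$ into part~(i), namely $\lambda(\lambda-Q-1)=R(k-\lambda-1)$, and into the consequence $W(k-\mu)=\mu(\lambda-R)$. Note $k-\lambda-1=m(2m+1)-m^2=m^2+m$ and $k-\mu=m(2m+1)-m^2=m^2+m$ and $\lambda=m^2-1$. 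So (i) becomes $(m^2-1)(m^2-2-Q)=R(m^2+m)$, i.e. $(m-1)(m+1)(m^2-2-Q)=Rm(m+1)$, giving $(m-1)(m^2-2-Q)=Rm$. The consequence becomes $W(m^2+m)=m^2(m^2-1-R)$, i.e. $W(m+1)=m(m^2-1-R)$, so $(m+1)\mid m(m^2-1-R)$, hence $(m+1)\mid (m^2-1-R)$ since $\gcd(m,m+1)=1$; as $m^2-1\equiv 0\pmod{m+1}$, actually $(m+1)\mid R$. These divisibility facts already constrain things, but a third, genuinely new, equation is needed.

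The third equation is where the bicirculant hypothesis enters, and it is the step I expect to be the main obstacle. The idea is to pick the $3$-isoregular edge $(x,y)$ and count, inside the neighborhood structure, using the $(2,n)$-semiregular automorphism $\rho$ and the symbol $[S,\widehat S,T]$ from Theorem~\ref{the:bicirc} with $|S|=(n-1)/2$ and $|T|=m^2=(n-\sqrt{2n-1})/2$. Concretely, one wants to exploit that the two orbits $U$ and $W$ of $\rho$ are swapped or fixed setwise, and that $\G[U]\cong\mathrm{Circ}(n,S)$, $\G[W]\cong\mathrm{Circ}(n,\widehat S)$ are complementary circulants; counting common neighbors of $x,y,z$ according to whether $z$ lies in the same orbit as $x$ or in the other orbit produces a parity obstruction. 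More precisely I would sum the valency of $\{x,y,z\}$ over all $z$ of a fixed type (say all $z$ with $\{x,y,z\}\cong K_{1,2}$, i.e. $z\in D_1^2\cup D_2^1$), getting $R\cdot 2(k-\lambda-1)$ on one side; on the other side this equals a sum over $w$ of the number of such configurations $w$ completes, and the orbit-splitting of the pair $(x,y)$ forces this total to be even, or to satisfy a congruence mod something like $n$ or $m$. Matching the two expressions yields the missing linear relation, e.g. something forcing $2R=m^2-1$, hence (with $R$ an integer) $m$ odd, and then $R=(m^2-1)/2$.

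Once $R=(m^2-1)/2$ is established, everything else is immediate arithmetic: plugging into $(m-1)(m^2-2-Q)=Rm=(m^2-1)m/2=(m-1)(m+1)m/2$ gives $m^2-2-Q=(m+1)m/2=(m^2+m)/2$, hence $Q=m^2-2-(m^2+m)/2=(m^2-m-4)/2$; and plugging $R$ into $W(m+1)=m(m^2-1-R)=m\big((m^2-1)/2\big)=m(m-1)(m+1)/2$ gives $W=m(m-1)/2$. Finally I would sanity-check these values against Proposition~\ref{pro:3iso-eq1}(ii), $\lambda\mu(k-2\lambda+Q)=W(k-\mu)(k-\lambda-1)$, which should hold identically and thus does not give anything new but confirms consistency. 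The only real work, then, is producing the correct counting/parity identity in the second paragraph; I would model it on the double-counting of edges between the $D_j^i$ blocks already used in the proof of Proposition~\ref{pro:3iso-eq1}, but now refining each block according to its intersection with the two orbits $U,W$ of $\rho$ and using that $|S|$ is even while $|T|$ has a prescribed parity relative to $m$.
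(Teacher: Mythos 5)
There is a genuine gap at the heart of your argument. The pivotal claim --- that $m$ is odd and $R=(m^2-1)/2$ --- is exactly what you do not prove: you defer it to a hoped-for ``third equation'' coming from the bicirculant symbol $[S,\widehat S,T]$ and a parity/congruence obstruction that you only describe in vague terms (``forces this total to be even, or to satisfy a congruence mod something like $n$ or $m$''). As written, the double count you sketch (summing the valency of $\{x,y,z\}$ over all $z$ with $\{x,y,z\}\cong K_{1,2}$) just reproduces the relation $R\,|D_1^2\cup D_2^1|=\#\{\text{edges between } D_1^1 \text{ and } D_1^2\cup D_2^1\}$ that already underlies Proposition~\ref{pro:3iso-eq1}, so no new information is extracted, and no concrete congruence is derived from $|S|$, $|T|$ or the orbit structure. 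Since everything you conclude afterwards ($Q=(m^2-m-4)/2$, $W=m(m-1)/2$) is conditional on $2R=m^2-1$, the proof is incomplete at its only nontrivial step.

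Moreover, the premise that a genuinely new equation is needed is false, and this is where your route diverges from the paper's. The two relations of Proposition~\ref{pro:3iso-eq1} (equivalently, (i) together with the consequence you used), exploited for their full integrality content, already pin everything down: from your identity $(m-1)(m^2-2-Q)=Rm$ one gets $Q+2=\alpha m$ and hence $R=(m-1)(m-\alpha)$ with $1\le\alpha\le m$; then relation (ii) gives $W=(\alpha+1)(m-1)m/(m+1)$, and integrality of $W$ (or, in your setup, $(m+1)\mid R=(m-1)(m-\alpha)$, i.e.\ $(m+1)\mid 2(\alpha+1)$) forces either $\alpha=m$, or $m$ odd with $\alpha=(m-1)/2$. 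The extremal case $\alpha=m$ (i.e.\ $Q=m^2-2$, $R=0$) is not excluded by divisibility at all --- this is a second omission in your write-up --- and is ruled out in the paper by the Hoffman bound \eqref{hoffman}, since it would produce a clique $D_1^1\cup\{x,y\}$ of size $m^2+1>1+k/m=2m+1$. So the correct repair of your proof is not a structural counting argument on the bicirculant symbol but a sharper use of the divisibility constraints you already had, plus the clique-size argument; your closing remark that relation (ii) ``does not give anything new'' has it backwards, since (ii) is precisely what forces $m$ odd and determines $\alpha$.
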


\begin{proof}
	Using Proposition \ref{pro:3iso-eq1}(i) we find that 
	$$
	  R=\frac{(m-1)(m^2-Q-2)}{m},
	$$
	and so $Q+2=\alpha m$ for some positive integer $\alpha$ (recall that $Q+2 \ge 2$). It follows that $R=(m-1)(m-\alpha)$. Since $R \ge 0$, we have that $1 \le \alpha \le m$. 	
	
We claim that $\alpha < m$. Assume to the contrary that $\alpha=m$. Then $Q=m^2-2$ and $R=0$. Let $D^1_1=\G_1(x) \cap \G_1(y)$ and note that $|D^1_1| = \lambda = m^2-1$. As $Q=m^2-2$, this shows that $D^1_1 \cup \{x,y\}$ is a clique of cardinality $m^2+1$ in $\G$. But this contradicts the Hoffman bound \eqref{hoffman}. This shows that $\alpha < m$.
	
	Using Proposition \ref{pro:3iso-eq1}(ii) we find that 
	$$
	W=\frac{(\alpha+1)(m-1)m}{m+1}.
	$$
If $m$ is even then $\gcd(m-1,m+1)=1$. It follows that $m+1$ divides $\alpha+1$, which is impossible as $1 \le \alpha \le m-1$.  If $m$ is odd then
$\gcd(m-1,m+1)=2$. It follows that $(m+1)/2$ divides $\alpha+1$, and so 
$\alpha=(m-1)/2$, which gives us that $R=(m^2-1)/2$. 
The expressions for $Q$ and $W$ then follow using Proposition~\ref{pro:3iso-eq1}.
\end{proof}

\begin{proposition}
	\label{prop:bicirc2}
	Let $\G$ be a strongly regular bicirculant with parameters $(2(2m^2+2m+1),m(2m+1),m^2-1,m^2)$ with $m \ge 1$, and let $x$ be a vertex of $\G$. Then $\G$ is not locally $3$-isoregular at $x$.
\end{proposition}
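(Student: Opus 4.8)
The plan is to assume for contradiction that $\G$ is locally $3$-isoregular at some vertex $x$ and to confront the constraints coming from a $3$-isoregular edge at $x$ with those coming from a $3$-isoregular non-edge at $x$. First I would dispose of the case $m=1$, where $\G$ is the Petersen graph: since no non-edge of the Petersen graph is $3$-isoregular (as noted just before Proposition~\ref{prop:bicirc1}), $\G$ cannot be locally $3$-isoregular at any vertex. So from now on assume $m\ge 2$. By the definition of local $3$-isoregularity, if $\G$ is locally $3$-isoregular at $x$ then there is a $3$-isoregular edge $(x,y)$ of $\G$, and Proposition~\ref{prop:bicirc1} forces $m$ to be odd with associated parameters $Q=(m^2-m-4)/2$, $R=(m^2-1)/2$, $W=m(m-1)/2$. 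In particular, if $m$ is even there is no $3$-isoregular edge at all, so $\G$ is not locally $3$-isoregular at any vertex; hence it remains to treat $m$ odd, $m\ge 3$.

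For such $m$, local $3$-isoregularity at $x$ also provides a $3$-isoregular non-edge $(x,z)$, with associated parameters $R',W',V$, and Proposition~\ref{pro:3iso-eq3} then yields $R'=R=(m^2-1)/2$ (and $W'=W$). The next step is to substitute this value of $R'$, together with $k=m(2m+1)$, $\lambda=m^2-1$ and $\mu=m^2$, into the identity of Proposition~\ref{pro:3iso-eq2}(ii). Since $k-\lambda-1=m^2+m$, one gets $k(k-\lambda-1)/\mu=(2m+1)(m+1)$, so the bracket on the right-hand side equals $m(m+2)$, while the left-hand factor satisfies $k-2-2\lambda+R'=(m^2+2m-1)/2$. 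Hence the identity collapses to $m^2(m^2+2m-1)/2 = V\,m(m+2)$, that is, $m(m^2+2m-1)=2V(m+2)$.

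Finally, writing $m(m^2+2m-1)=m^2(m+2)-m$, the last equation rearranges to $m=(m+2)\big(m^2-2V\big)$, which is impossible since $0<m<m+2$ would force the integer $m^2-2V$ to equal the non-integer $m/(m+2)$. This contradiction completes the proof. The only computational work is the routine simplification of Proposition~\ref{pro:3iso-eq2}(ii), so I do not anticipate a genuine obstacle; the single point that needs care is to observe that being locally $3$-isoregular at $x$ delivers simultaneously a $3$-isoregular edge and a $3$-isoregular non-edge incident to $x$, which is exactly what is required for Propositions~\ref{prop:bicirc1} and~\ref{pro:3iso-eq3} to apply and for the value $R'=(m^2-1)/2$ to be legitimately available.
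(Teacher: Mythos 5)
Your proposal is correct and follows essentially the same route as the paper: dispose of $m=1$ via the Petersen graph, invoke Proposition~\ref{prop:bicirc1} to force $m$ odd with $R=(m^2-1)/2$, transfer this to the non-edge via Proposition~\ref{pro:3iso-eq3}, and substitute into Proposition~\ref{pro:3iso-eq2}(ii) to reach a divisibility contradiction. The only (harmless) difference is cosmetic: the paper concludes from $V=\frac{m(m^2+2m-1)}{2(m+2)}$ that $m+2$ must divide $m(m+2)-1$, while you rearrange to $m=(m+2)(m^2-2V)$, which yields the same contradiction.
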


\begin{proof}
	If $m=1$, then $\G$ is the Petersen graph, and 
the result follows in view of the discussion in the beginning of this section.
Assume therefore that $m \ge 2$ and that $\G$ is locally $3$-isoregular at $x$. Let $y,z\in V(\G)$  such that $(x,y)$ is a $3$-isoregular edge, and $(x,z)$ is a $3$-isoregular non-edge of $\G$. 
Then by Proposition~\ref{pro:3iso-eq3} there exist nonnegative integers $Q,R,W,V$ 
such that the triples
 $Q,R,W$ and $R,W,V$ are the respective associated parameters
for $(x,y)$ and $(x,z)$. Then, by Proposition \ref{prop:bicirc1},
it follows that $m$ is odd and $Q=(m^2-m-4)/2$, $R=(m^2-1)/2$, $W=(m(m-1))/2$. Finally, using Proposition \ref{pro:3iso-eq2}(ii) 
we now find that 
	$$
	  V = \frac{m(m^2+2m-1)}{2(m+2)}.
	$$ 
Since $m$ is odd, this implies that $m+2$ divides $m^2+2m-1=m(m+2)-1$, a contradiction.
\end{proof}

\begin{theorem}
	\label{thm:bicirc3}
	Let $\G$ be a strongly regular $n$-bicirculant, $n$ odd,
and let $x\in V(\G)$.
Then  $\G$ is not locally $3$-isoregular  at $x$.
\end{theorem}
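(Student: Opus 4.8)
The plan is to reduce Theorem~\ref{thm:bicirc3} to the two propositions already established, using the structural dichotomy for strongly regular $n$-bicirculants with $n$ odd provided by Theorem~\ref{the:bicirc}, together with the observation in Section~\ref{sec:locally} that local $3$-isoregularity at $x$ is preserved under complementation. First I would invoke Theorem~\ref{the:bicirc}: if $\G$ is a non-trivial strongly regular $n$-bicirculant with $n$ odd, then up to taking complements its parameters are exactly $(2(2m^2+2m+1), m(2m+1), m^2-1, m^2)$ for some positive integer $m$. (If $\G$ is trivial there is nothing to prove, since a trivial strongly regular graph is a disjoint union of cliques or a complete multipartite graph, neither of which is connected together with its complement, contradicting the standing non-triviality assumption — so this case does not arise.)

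Next I would use the complementation remark from Section~\ref{sec:locally}: $\G$ is locally $3$-isoregular at $x$ if and only if $\overline{\G}$ is locally $3$-isoregular at $x$. Hence it suffices to rule out local $3$-isoregularity at $x$ for a graph in the ``first'' parameter family $(2(2m^2+2m+1), m(2m+1), m^2-1, m^2)$, because the ``second'' family consists precisely of the complements of graphs in the first one, as noted at the start of Section~\ref{sec:bicirc}. But that is exactly the content of Proposition~\ref{prop:bicirc2}, which handles all $m \ge 1$ (the case $m=1$ being the Petersen graph, dispatched by the explicit discussion at the start of the section, and $m \ge 2$ by the contradiction ``$m+2$ divides $m(m+2)-1$'' obtained from Propositions~\ref{prop:bicirc1} and~\ref{pro:3iso-eq2}(ii)). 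Assembling these pieces gives the theorem.

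The only real content beyond quotation is making sure the two parameter families in Theorem~\ref{the:bicirc} are genuinely exchanged by complementation and that no strongly regular $n$-bicirculant with $n$ odd falls outside both families; this is immediate from Theorem~\ref{the:bicirc} since it asserts the parameters are determined ``up to taking complements.'' I do not anticipate a serious obstacle here: the heavy lifting — deriving the forced values of $Q,R,W$ in Proposition~\ref{prop:bicirc1} via the identities of Proposition~\ref{pro:3iso-eq1}, using the Hoffman bound \eqref{hoffman} to kill the case $\alpha = m$, and exploiting the parity/divisibility constraints coming from $\gcd(m-1,m+1)$ — has already been done. The proof of the theorem is therefore essentially a bookkeeping step that routes every strongly regular $n$-bicirculant, $n$ odd, through Proposition~\ref{prop:bicirc2}.
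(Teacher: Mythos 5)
Your proposal is correct and follows essentially the same route as the paper: invoke the complementation invariance of local $3$-isoregularity at $x$, use Theorem~\ref{the:bicirc} to conclude that either $\G$ or $\overline{\G}$ has parameters $(2(2m^2+2m+1),m(2m+1),m^2-1,m^2)$, and apply Proposition~\ref{prop:bicirc2}. The extra remark about the trivial case is harmless bookkeeping already covered by the paper's standing non-triviality assumption.
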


\begin{proof}
Recall that $\G$ is locally $3$-isoregular at $x$ if and only if $\overline{\G}$ is locally $3$-isoregular at $x$. Since either $\G$ or $\overline{\G}$ has the strong regularity parameters $(2(2m^2+2m+1),m(2m+1),m^2-1,m^2)$,  
$m \ge 1$, the result follows from Proposition \ref{prop:bicirc2}.
\end{proof}

The result about non-existence of $3$-isoregular $n$-bicirculants
in the $n$ odd case, is now immediate.

\begin{corollary}
	\label{cor:3iso-2}
	There exists no $3$-isoregular $n$-bicirculant for $n$ odd.
\end{corollary}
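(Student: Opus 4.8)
The plan is to reduce Corollary~\ref{cor:3iso-2} directly to the local statement already proved in Theorem~\ref{thm:bicirc3}. The key observation is that $3$-isoregularity is a much stronger hypothesis than local $3$-isoregularity: if $\G$ is a $3$-isoregular $n$-bicirculant with $n$ odd, then in particular $\G$ is strongly regular (by the remarks in the introduction, since $2$-isoregular is the same as strongly regular), and moreover \emph{every} edge and \emph{every} non-edge of $\G$ is $3$-isoregular, as noted right after the definition of local $3$-isoregularity in Section~\ref{sec:locally}. So fix any vertex $x$; since $\G$ is connected and non-trivial, $x$ has at least one neighbor $y$ and at least one non-neighbor $z$, and both the edge $(x,y)$ and the non-edge $(x,z)$ are $3$-isoregular. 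Hence $\G$ is locally $3$-isoregular at $x$, contradicting Theorem~\ref{thm:bicirc3}.

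First I would recall that a $3$-isoregular graph is in particular $2$-isoregular, hence strongly regular, so that Theorem~\ref{thm:bicirc3} applies to it. Then I would invoke the elementary fact (the sentence ``Clearly, in a $3$-isoregular graph all edges and non-edges are $3$-isoregular'' from Section~\ref{sec:locally}) to produce a $3$-isoregular edge and a $3$-isoregular non-edge incident to an arbitrarily chosen vertex $x$; existence of such an edge and non-edge uses only that $\G$ is neither complete nor edgeless, which holds since we restrict to non-trivial strongly regular graphs. This exhibits $\G$ as locally $3$-isoregular at $x$. Finally, Theorem~\ref{thm:bicirc3} says precisely that no strongly regular $n$-bicirculant with $n$ odd is locally $3$-isoregular at any vertex, so such a $\G$ cannot exist.

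There is essentially no obstacle here: the corollary is a one-line consequence of the theorem together with the trivial implication ``$3$-isoregular $\Rightarrow$ locally $3$-isoregular.'' The only point requiring a word of care is that the definition of locally $3$-isoregular in Section~\ref{sec:locally} is stated for ``finite, simple, connected'' graphs, so I would note that a non-trivial strongly regular graph is connected (by definition of non-trivial), ensuring the framework applies. All the real work has already been done in Propositions~\ref{prop:bicirc1} and~\ref{prop:bicirc2} via the parameter identities of Propositions~\ref{pro:3iso-eq1}--\ref{pro:3iso-eq3} and the Hoffman bound \eqref{hoffman}; the corollary merely records the specialization.

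\begin{proof}
A $3$-isoregular graph is in particular $2$-isoregular, hence strongly regular, and moreover every edge and every non-edge of it is $3$-isoregular (see Section~\ref{sec:locally}). Suppose $\G$ were a $3$-isoregular $n$-bicirculant with $n$ odd. Being a non-trivial strongly regular graph, $\G$ is connected and has at least one non-edge, so any vertex $x$ has both a neighbor $y$ and a non-neighbor $z$; the edge $(x,y)$ and the non-edge $(x,z)$ are then $3$-isoregular, so $\G$ is locally $3$-isoregular at $x$. This contradicts Theorem~\ref{thm:bicirc3}.
\end{proof}
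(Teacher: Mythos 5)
Your proof is correct and is exactly the deduction the paper intends: the corollary is stated as an immediate consequence of Theorem~\ref{thm:bicirc3}, using that a $3$-isoregular (hence strongly regular, non-trivial by the paper's standing convention) bicirculant has every edge and non-edge $3$-isoregular and is therefore locally $3$-isoregular at any vertex. No gaps; your remark about connectedness/non-triviality is the only point of care and you handle it correctly.
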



\section{On non-existence of $3$-isoregular $n$-bicirculants for $n$ even}
\label{sec:addendum}
\indent

The aim of this section is to lay out the ground for future research on $3$-isoregularity and more generally strong regularity of $n$-bicirculants, $n$ even.  
The following result of Leung and Ma \cite{LM93} is essential in this respect.
Note that the `partial difference triple'  refers to the triple $(C,D,D')$ where $C=T$, $D=S$ and $D'=S'$
in our notation for bicirculants 
in Subsection~\ref{sub:bi}. 
Also, $c=|C|=|T|$, $d=|D|=|D'|=|S|=|S'|$.

\begin{proposition}\label{pro:LM93}
{\rm \cite[Theorem~3.1]{LM93}}
Up to complementation, the parameters for any non-trivial partial
difference triples in cyclic groups are the following:
\begin{enumerate}[(a)]
\itemsep=0pt
\item $(n;c,d;\lambda,\mu)=(2m^2+2m+1; m^2,m^2+m;m^2-1,m^2)$ where $m\ge 1$.
\item $(n;c,d;\lambda,\mu)=(2m^2; m^2,m^2-m;m^2-m,m^2-m)$ where $m\ge 2$.
\item $(n;c,d;\lambda,\mu)=(2m^2; m^2,m^2+m;m^2+m,m^2+m)$ where $m\ge 3$.
\item $(n;c,d;\lambda,\mu)=(2m^2; m^2\pm m,m^2;m^2\pm m,m^2\pm m)$ where $m\ge 2$.
\end{enumerate}
\end{proposition}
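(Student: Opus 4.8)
The plan is to re-derive the list by encoding strong regularity of the bicirculant with symbol $[D,D',C]$ (using the dictionary $S=D$, $S'=D'$, $T=C$ of Subsection~\ref{sub:bi}) as equations in the integral group ring $\ZZ[\ZZ_n]$ and then analysing these equations character by character. Write $\mathbf{D},\mathbf{D}',\mathbf{C}$ for the $0/1$ group-ring elements supported on $D,D',C$, put $\mathbf{G}=\sum_{g\in\ZZ_n}x^{g}$, and let $\mathbf{C}^{(-1)}$ denote the image of $\mathbf{C}$ under $x\mapsto x^{-1}$ (so $\mathbf{D}^{(-1)}=\mathbf{D}$ and $(\mathbf{D}')^{(-1)}=\mathbf{D}'$, as $D,D'$ are inverse-closed). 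Writing the adjacency matrix of the bicirculant in $2\times 2$ circulant-block form and imposing $A^{2}=(k-\mu)I+(\lambda-\mu)A+\mu J$ gives, block by block, the identities
$$
\mathbf{D}^{2}+\mathbf{C}\,\mathbf{C}^{(-1)}=(k-\mu)+(\lambda-\mu)\mathbf{D}+\mu\,\mathbf{G},\qquad \mathbf{D}\mathbf{C}+\mathbf{C}\mathbf{D}'=(\lambda-\mu)\mathbf{C}+\mu\,\mathbf{G},
$$
together with the one symmetric to the first in $\mathbf{D}'$; the trivial character, combined with regularity, supplies in addition the numerical relations $k=d+c$ with $c=|C|$, $d=|D|=|D'|$, and the usual feasibility conditions for strongly regular parameters. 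After passing to the complementary graph when convenient --- which complements $C$ inside $\ZZ_n$ and $D,D'$ inside $\ZZ_n\hsh$ --- and setting aside the imprimitive (``trivial'') cases, we may assume the triple is non-trivial, equivalently that the ambient strongly regular graph is primitive.

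Next I would apply an arbitrary non-trivial character $\chi$, which annihilates every $\mathbf{G}$-term. The second identity becomes $\chi(\mathbf{C})\bigl(\chi(\mathbf{D})+\chi(\mathbf{D}')-(\lambda-\mu)\bigr)=0$, producing a dichotomy for each such $\chi$: either $\chi(\mathbf{C})=0$, and then the first identity forces $\chi(\mathbf{D})$, and likewise $\chi(\mathbf{D}')$, to equal one of the two non-trivial eigenvalues $r>s$ of the ambient graph, i.e.\ a root of $t^{2}-(\lambda-\mu)t-(k-\mu)=0$; or $\chi(\mathbf{C})\neq 0$, and then $\chi(\mathbf{D})+\chi(\mathbf{D}')=\lambda-\mu$ and
$$
|\chi(\mathbf{C})|^{2}=(k-\mu)+(\lambda-\mu)\chi(\mathbf{D})-\chi(\mathbf{D})^{2}\geq 0 .
$$
Since $D=-D$, the cyclotomic integer $\chi(\mathbf{D})$ is real; applying the same dichotomy to every character in the Galois orbit of $\chi$ and using the displayed inequality shows that $\chi(\mathbf{D})$ and all of its conjugates lie in the closed interval $[s,r]$, which then bounds $\chi(\mathbf{D}')=\lambda-\mu-\chi(\mathbf{D})$ and $|\chi(\mathbf{C})|^{2}$ as well. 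Parseval's identity $\sum_{\chi}|\chi(\mathbf{C})|^{2}=nc$ records how the non-trivial characters split between the two branches.

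The heart of the proof --- and the step I expect to be the main obstacle --- is to promote this spectral picture to an exact structural description of the triple $(C,D,D')$. This is where the genuine number theory enters: one needs the classification of inverse-closed subsets of a cyclic group whose non-trivial character values are constrained as above (that is, the theory of reversible partial difference sets in cyclic groups), sharpened bounds on cyclotomic integers $\alpha\in\ZZ[\zeta_{n}]$ all of whose conjugates are uniformly small, and a descent on the prime factorisation of $n$ that controls the subgroups of $\ZZ_n$ modulo which $\mathbf{C}$ is, or fails to be, a union of cosets; along the way one must also dispose of the ``conference graph'' configuration in which $r$ and $s$ are irrational. Once the possibilities for $\mathbf{C}$ are in hand, the companion conditions $\chi(\mathbf{D})+\chi(\mathbf{D}')=\lambda-\mu$ and $\chi(\mathbf{D}),\chi(\mathbf{D}')\in\{r,s\}$ reconstruct $\mathbf{D}$ and $\mathbf{D}'$.

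Finally I would feed all of this into the integrality requirements for strongly regular parameters --- integrality of the eigenvalue multiplicities, the relation $k(k-\lambda-1)=\mu(n-1-k)$ of \eqref{eq:srg}, and the Parseval counts --- which force $n$, $c$ and $d$ to be quadratic polynomials in a single integer parameter $m$; a short case analysis on the signs and relative sizes of $r$ and $s$ then separates the solutions into exactly the four families (a)--(d). In the odd-order case one recovers $S'=\widehat{S}$, whence $\lambda-\mu=-1$ and only family (a) survives, in agreement with Theorem~\ref{the:bicirc}; families (b), (c) and (d) all satisfy $n=2m^{2}$ and so occur only when $n$ is even.
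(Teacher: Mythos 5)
This statement is not proved in the paper at all: it is imported verbatim from Leung and Ma \cite[Theorem~3.1]{LM93}, so the only fair comparison is with that classification, which your sketch would have to reprove. Your setup is sound and is indeed the standard route: the block form of $A^{2}=(k-\mu)I+(\lambda-\mu)A+\mu J$ does translate into the two group-ring identities you write, the trivial character gives $k=c+d$ and $|D|=|D'|$, and for a non-trivial character $\chi$ the dichotomy ($\chi(\mathbf{C})=0$ forcing $\chi(\mathbf{D}),\chi(\mathbf{D}')\in\{r,s\}$, versus $\chi(\mathbf{D})+\chi(\mathbf{D}')=\lambda-\mu$ with $|\chi(\mathbf{C})|^{2}=(k-\mu)+(\lambda-\mu)\chi(\mathbf{D})-\chi(\mathbf{D})^{2}\ge 0$, hence $\chi(\mathbf{D})\in[s,r]$) is correct.

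The genuine gap is exactly where you flag it: the passage from these character constraints to the explicit list (a)--(d). Everything that makes the theorem true for \emph{cyclic} groups --- the structural control of $C$, $D$, $D'$ via reversible partial difference sets, the cyclotomic-integer and subgroup/coset descent arguments, the elimination of the conference-graph case with irrational $r,s$, and the final reduction showing $n$ is either $2m^{2}+2m+1$ or $2m^{2}$ with precisely the stated $(c,d,\lambda,\mu)$ and ranges of $m$ --- is invoked by name but not carried out, and your closing claim that the integrality conditions ``force $n$, $c$ and $d$ to be quadratic polynomials in a single parameter $m$'' is the conclusion of Leung--Ma's theorem, not something that follows from \eqref{eq:srg}, multiplicity integrality and Parseval alone (these admit many spurious parameter sets that only the structural cyclic-group arguments exclude). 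The treatment of the trivial/imprimitive triples set aside at the start is likewise never done. As it stands the proposal is a correct reduction to, and an accurate outline of, the hard part of \cite{LM93}, but not a proof of the proposition; if you do not intend to reproduce Leung and Ma's analysis, the honest course is to cite their Theorem~3.1, as the paper does.
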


By Proposition~\ref{pro:LM93} there are four families of strongly regular $n$-bicirculants for $n$ even (essentially two when one only takes into account parameters $k$, $\lambda$ and $\mu$ and  disregards the cardinalities of sets $C$, $D$ and $D'$ in their notation, 
respectively, $T$, $S$ and $S'$ in our notation). Namely,
both (b) and (c)  in Proposition~\ref{pro:LM93} have their counterparts in the two variations of (d). So, since $k=c+d$, the parameters for such a graph $\Gamma$ are as follows, respectively for (b) and (c):
$$
(2n,k,\lambda,\mu) = (4m^2, 2m^2-m, m^2-m, m^2-m)
$$
\noindent
and
$$
(2n,k,\lambda,\mu) = (4m^2, 2m^2+m, m^2+m, m^2+m)
$$

The goal of this section is to  prove that no locally $3$-isoregular
$n$-bicirculant, $n =2m^2$, exists for $m$ odd.
Using Propositions \ref{pro:3iso-eq1} and {\ref{pro:3iso-eq2} we get the following expressions for parameters $Q$, $W$ and $V$ in terms of $R$, $k$ and $\lambda$ (using that $\lambda = \mu$).

$$
\lambda Q = \lambda (\lambda -1)  -(k-\lambda -1) R.
$$

$$
(k-\lambda) W = \lambda (\lambda - R).
$$

$$
V\Big(\frac{k(k-\lambda-1)}{\lambda}-k+\mu-1 \Big) =
\lambda ( k-2-2\lambda + R).
$$

We deal first with case (b).

\begin{proposition}
\label {pro:(b)}
For $m$ odd there are no locally $3$-isoregular graphs  in family (b) of 
Proposition~\ref{pro:LM93}.
\end{proposition}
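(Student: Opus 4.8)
The plan is to argue by contradiction, following the same pattern as the proof of Proposition~\ref{prop:bicirc2}. Suppose $\G$ lies in family (b), so that up to complementation its strong regularity parameters are $(4m^2,\,2m^2-m,\,m^2-m,\,m^2-m)$ with $m$ odd; since family (b) requires $m\ge 2$, oddness forces $m\ge 3$. Assume $\G$ is locally $3$-isoregular at a vertex $x$, pick a $3$-isoregular edge $(x,y)$ with associated parameters $Q,R,W$ and a $3$-isoregular non-edge $(x,z)$ with associated parameters $R',W',V$. By Proposition~\ref{pro:3iso-eq3} we have $R=R'$ and $W=W'$, so $Q,R,W,V$ are tied together by the three displayed identities preceding this statement (these are Propositions~\ref{pro:3iso-eq1} and~\ref{pro:3iso-eq2} specialised to $\lambda=\mu$). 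I would first record that $\lambda=m^2-m>0$, $k-\mu=m^2\ne 0$ and $k-\lambda-1=m^2-1\ne 0$ for $m\ge 3$, so that no degenerate subcase (such as an empty $D^1_1$) arises and all divisions performed below are legitimate.

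Next I would substitute $k=2m^2-m$ and $\lambda=\mu=m^2-m$ into the three identities. A routine computation gives $k-\lambda=m^2$, $k-\lambda-1=m^2-1$ and $\dfrac{k(k-\lambda-1)}{\lambda}-k+\mu-1=(m+2)(m-1)$, and after cancelling the common factors of $m$ and $m-1$ the identities reduce to
\[
 mQ=m(m^2-m-1)-(m+1)R,\qquad
 mW=(m-1)(m^2-m-R),\qquad
 (m+2)V=m\,(m-2+R).
\]
From the middle identity $m\mid (m-1)(m^2-m-R)$, and since $\gcd(m,m-1)=1$ this gives $m\mid R$; write $R=m\rho$ with $\rho$ a nonnegative integer. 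Then $W=(m-1)(m-1-\rho)$, so $W\ge 0$ forces $\rho\le m-1$ (and the first identity gives $Q=m^2-m-1-(m+1)\rho$, which is consistent).

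Finally, substituting $R=m\rho$ into the third identity yields $(m+2)V=m(m-2)+m^2\rho$. Reducing modulo $m+2$ and using $m\equiv -2\pmod{m+2}$ shows $(m+2)\mid 8+4\rho=4(\rho+2)$; since $m$ is odd the modulus $m+2$ is odd, hence coprime to $4$, and therefore $(m+2)\mid(\rho+2)$. But $0<\rho+2\le m+1<m+2$, a contradiction, completing the proof. I do not anticipate a genuine obstacle here: the argument is a short chain of divisibility reductions, and the parity hypothesis on $m$ enters exactly once, at the last step, where oddness of $m+2$ lets us strip off the factor $4$ — precisely the role that oddness of $m$ played in Proposition~\ref{prop:bicirc1}. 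The only bookkeeping point to watch is tracking which cancellations by $m$, $m-1$, $\lambda$ and $k-\mu$ are valid, which is why I would note the relevant positivity statements at the outset.
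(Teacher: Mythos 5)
Your proposal is correct and follows essentially the same route as the paper: specialise the identities of Propositions~\ref{pro:3iso-eq1} and~\ref{pro:3iso-eq2} (with $R=R'$, $W=W'$) to $(4m^2,2m^2-m,m^2-m,m^2-m)$, deduce $m\mid R$, and derive a contradiction from the equation for $V$ modulo $m+2$ using that $m+2$ is odd. The only (harmless) differences are bookkeeping: you bound $R/m\le m-1$ via $W\ge 0$ where the paper uses $Q\ge 0$ to get $R/m\le m-2$, and you strip the factor $4$ from $4(\rho+2)$ at once instead of first using $\gcd(m,m+2)=1$.
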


\begin{proof}
We have that
$k = 2m^2-m,$ and $\lambda = m^2-m$, giving us the following 
expressions for the above parameters:

\begin{equation}
\label{equ:Q}
Q = m^2 -m-1 - \frac {(m+1)R}{m},
\end{equation}

\begin{equation}
\label{equ:W}
W = (m-1)^2 - \frac {(m-1)R}{m},
\end{equation}

\begin{equation}
\label{equ:V}
 V =  \frac{m(m-2+R)}{m+2}.
\end{equation}

\noindent
Now since all of these parameters are non-negative integers we get that $m$ divides $R$, and  also that  $R \leq m^2-m$ and that 
$R \leq \frac{m(m^2-m-1)}{m+1}=(m-1)^2-\frac{1}{m+1} < (m-1)^2$. The second inequality gives us that  
$R \leq m(m-2)$. Hence we can say that there is a non-negative integer $\alpha$ such that  $R = \alpha m$ and furthermore 

\begin{equation}
\label{equ:alpha}
\alpha \leq m-2.
\end{equation}

\noindent
Apply this for the expression of $V$ in  (\ref{equ:V}) to get that

$$
V =\frac{m^2-2m +mR}{m+2} = \frac{m^2+ 2m -4m +mR}{m+2} =
m + \frac{m(R-4)}{m+2} = m + \frac{m(\alpha m - 4)}{m+2}.
$$

\noindent
Since we assume that $m$ is odd we have that $m$ and $m+2$ are coprime and so  $m+2$ is a divisor of 
$\alpha m - 4 = (\alpha m + 2 \alpha - 2\alpha  - 4)$ 
and so $m+2$ is a divisor of $2\alpha + 4$ and so also of $\alpha +2$. 
In particular we have that
$m \leq \alpha$ contradicting (\ref{equ:alpha}).
\end{proof}

We deal next with case (c).

\begin{proposition}
\label {pro:(c)}
For $m$ odd there are no locally $3$-isoregular graphs in family (c) of 
Proposition~\ref{pro:LM93}.
\end{proposition}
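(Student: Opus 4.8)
The plan is to follow the scheme of the proof of Proposition~\ref{pro:(b)}: substitute the parameters of family~(c), namely $k=2m^2+m$ and $\lambda=\mu=m^2+m$, into the three displayed identities preceding Proposition~\ref{pro:(b)}. Using $k-\lambda-1=m^2-1$, $k-\lambda=m^2$ and $k(k-\lambda-1)/\lambda=(2m+1)(m-1)$, a short computation expresses the parameters of a locally $3$-isoregular configuration at a vertex $x$ in terms of $R$:
$$
Q=m^2+m-1-\frac{(m-1)R}{m},\qquad W=(m+1)^2-\frac{(m+1)R}{m},\qquad V=\frac{m(R-m-2)}{m-2}.
$$
Integrality of $Q$ (or of $W$) together with $\gcd(m,m-1)=1$ forces $m\mid R$, so I would write $R=\alpha m$ with $\alpha$ a non-negative integer; then $Q=m^2+m-1-(m-1)\alpha$, $W=(m+1)(m+1-\alpha)$ and $V=m((\alpha-1)m-2)/(m-2)$, and non-negativity of $W$ and of $V$ confines $\alpha$ to $2\le\alpha\le m+1$.

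Next I would use that $m$ is odd. As $m-2$ is odd we have $\gcd(m-2,m)=1$, so integrality of $V$ forces $(m-2)\mid((\alpha-1)m-2)$; reducing modulo $m-2$ gives $(m-2)\mid 2(\alpha-2)$, hence $(m-2)\mid(\alpha-2)$. Combined with $0\le\alpha-2\le m-1$ this leaves only $\alpha\in\{2,m\}$ when $m\ge 5$; in the base case $m=3$ (parameters $(36,21,12,12)$) it leaves $\alpha\in\{2,3,4\}$, and the extra value $\alpha=4$ can be discarded at once since it gives $V=21=k$, which is impossible ($V=k$ would force two non-adjacent vertices to have equal neighbourhoods and hence $\mu=k$). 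So in all cases it remains to rule out $\alpha=2$ and $\alpha=m$.

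For both values I would analyse, for the $3$-isoregular non-edge $(x,z)$, the partition into $D^i_j=\G_i(x)\cap\G_j(z)$, with $|D^1_1|=\mu=m^2+m$, $|D^1_2|=|D^2_1|=k-\mu=m^2$ and $|D^2_2|=m^2-m-2$, exactly as in the proof of Proposition~\ref{pro:3iso-eq2}. Fix $u\in D^2_2$ (this set is non-empty, since $m^2-m-2\ge 4$ for $m\ge 3$); then $\{x,z,u\}$ induces $\overline{K_3}$, so $|N(u)\cap D^1_1|=V$. If $\alpha=m$ then $V=\mu=|D^1_1|$, so $D^1_1\subseteq N(u)$; from $|N(u)\cap\G_1(x)|=|N(u)\cap\G_1(z)|=\mu$ it follows that $N(u)$ meets neither $D^1_2$ nor $D^2_1$, so all $k-\mu=m^2$ remaining neighbours of $u$ lie in $D^2_2\setminus\{u\}$, which is impossible because $|D^2_2|-1=m^2-m-3<m^2$. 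If $\alpha=2$ then $V=m$, and the same bookkeeping shows $D^1_2\cup D^2_1\subseteq N(u)$ together with exactly $m$ further neighbours in $D^1_1$; counting gives $|N(u)|\ge 2m^2+m=k$, whence $N(u)\cap D^2_2=\emptyset$. Hence $D^2_2$ is independent and $\{x,z\}\cup D^2_2$ is a coclique of $\G$ of size $m^2-m$; but the Hoffman bound~\eqref{hoffman} applied to $\overline{\G}$ (valency $2m^2-m-1$, smallest eigenvalue $-(m+1)$) bounds the cocliques of $\G$ by $2m-2<m^2-m$ for $m\ge 3$, a contradiction.

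The arithmetic of the first two paragraphs is the same in spirit as in Proposition~\ref{pro:(b)}, so I expect the only delicate point to be the neighbourhood bookkeeping inside the sets $D^i_j$ in the final paragraph, where one must track which of $\G_0(z),\G_1(z),\G_2(z)$ each neighbour of $u$ belongs to — precisely the subtlety behind Propositions~\ref{pro:3iso-eq2} and~\ref{pro:3iso-eq3}.
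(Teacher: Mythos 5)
Your argument is correct, and although it opens exactly as the paper's proof does --- the identities \eqref{equ:Qc}--\eqref{equ:Vc}, the deduction $m\mid R$, $R=\alpha m$, and (for $m$ odd) the divisibility $(m-2)\mid(\alpha-2)$ --- the endgame is genuinely different. The paper combines $V\le\mu$ (giving $\alpha\le m$) with the divisibility (read as giving $m\le\alpha$) to pin down $\alpha=m$, and then derives a contradiction from $W=m+1$: each $w\in D^1_2=\G_1(x)\cap\G_2(z)$ has $\lambda-W=m^2-1$ neighbours inside $D^1_2$, so $\{x\}\cup D^1_2$ is a clique of size $m^2+1$, violating the Hoffman bound \eqref{hoffman}. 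You instead retain both residual values $\alpha\in\{2,m\}$ (plus $\alpha=4$ at $m=3$, which indeed dies immediately, most quickly from $V\le\mu$) and eliminate them by neighbourhood bookkeeping at a vertex $u\in D^2_2$: for $\alpha=m$ you use $V=\mu$ to force $k-\mu=m^2$ neighbours of $u$ into the set $D^2_2\setminus\{u\}$ of size $m^2-m-3$, and for $\alpha=2$ you use $V=m$ to show $D^1_2\cup D^2_1\subseteq N(u)$, hence that $\{x,z\}\cup D^2_2$ is an independent set of size $m^2-m$, exceeding the bound $1+(2m^2-m-1)/(m+1)<2m-1$ obtained by applying \eqref{hoffman} to $\overline{\G}$ (valency $2m^2-m-1$, least eigenvalue $-(m+1)$); I checked the cardinalities $|D^1_2|=|D^2_1|=m^2$, $|D^2_2|=m^2-m-2$ and both counts, and they are sound. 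What your longer route buys is precisely the case $\alpha=2$: the divisibility $(m-2)\mid(\alpha-2)$ is also satisfied by $\alpha=2$ (where $Q,R,W,V=m^2-m+1,\,2m,\,m^2-1,\,m$ are all non-negative integers), so the paper's step ``and so $m\le\alpha$'' passes over this value, and an argument such as your coclique bound is genuinely needed to exclude it; your treatment is therefore more airtight at that point, at the cost of one extra combinatorial case, while the paper's clique argument for $\alpha=m$ is slightly shorter than, but equivalent in spirit to, your counting inside $D^2_2$.
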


\begin{proof}
Assume to the contrary that there exists $\Gamma$ in family (c) of Proposition~\ref{pro:LM93}, that is locally $3$-isoregular at vertex $x$. Fix a neighbour $y$ and a non-neighbour $z$ of $x$, such that $(x,y)$ and $(x,z)$ are $3$-isoregular. Let $Q,R,W,V$ be the associated parameters. Recall  that $k = 2m^2+m,$ and $\lambda = m^2+m$, giving us the following expressions for the above parameters:

\begin{equation}
\label{equ:Qc}
Q = m^2 + m-1 - \frac {(m-1)R}{m},
\end{equation}

\begin{equation}
\label{equ:Wc}
W = (m+1)^2 - \frac {(m+1)R}{m},
\end{equation}

\begin{equation}
\label{equ:Vc}
 V =  \frac{m(R - m-2)}{m-2}.
\end{equation}

\noindent
Again there exists a non-negative integer $\alpha$
such that $R = \alpha m$. 
Using this fact in the expression for $V$  in (\ref{equ:Vc})
we have, with an analogous argument as in the previous proposition,
that $m-2$ divides $2\alpha -4$. Since $m$ is odd this implies that
$m-2$  divides $\alpha -2$, and so $m \le \alpha$.
Furhermore, note that it follows from the definition of parameter $V$ that $V \le \mu$. Therefore, \eqref{equ:Vc} implies that $R = \alpha m \le m^2$, and consequently $\alpha \le m$. The above two inequalities thus imply that $\alpha=m$, and so we have the following values for $Q$, $R$, $W$ and $V$:
$$
R = m^2, \quad Q = 2m-1, \quad W = m+1, \quad V = m(m+1).
$$

\noindent
We will now argue that this is not possible. First note that the eigenvalues of  $\Gamma$ are $k=2m^2+m$ and 
$$
  r,s = \frac{(\lambda-\mu) \pm \sqrt{(\lambda-\mu)^2+4(k-\mu)}}{2}= \pm \sqrt{k-\mu} = \pm m.
$$
By the Hoffman's bound, any clique $C$ in $\Gamma$ satisfies 
$$
  |C| \le 1+\frac{k}{m} = 2+2m.
$$
Recall the sets $D_1^1=\Gamma(x) \cap \Gamma(z)$ and $D^1_2=\Gamma(x) \cap \Gamma_2(z)$, and observe that $\Gamma(x)=D^1_1 \cup D^1_2$.  Note that $|D^1_1|=\mu=m^2+m$, and consequently $|D^1_2|=k-|D^1_1| = 2m^2+m-m^2-m=m^2$. Pick a vertex $w \in D^1_2$ and note that $w$ has exactly $W=m+1$ neighbours in $D^1_1$. Since $x$ and $w$ must have exactly $\lambda=m^2+m$ common neighbours, it follows that $w$ has exactly $m^2-1$ neighbours in $D^1_2$. As $|D^1_2|=m^2$, this implies that the subgraph of $\Gamma$, induced on $\{x\} \cup D^1_2$, is a clique with cardinality $1+m^2$. Using the above Hoffman's bound we now get that
$$
  1 + m^2 \le 2+2m,
$$
a contradiction as $m \ge 3$ by Proposition \ref{pro:LM93}.

\end{proof}

\begin{remark}
{\rm 
Of course, the same approach follows also in the case $m$ even but here we do have to consider also the possibility that $(m\pm 2)/2$, respectively, 
divide $\alpha \pm 2$ and so  the computation for the parameters $Q$, $R$, $W$ and $V$ gives  us the following, respectively for cases (b) and (c):

$$
Q = W = (m^2-m)/2,  R =V = (m^2 -2m)/2
$$

\noindent
and

$$
Q = W = (m^2+m)/2,  R =V = (m^2 +2m)/2.
$$
}
\end{remark}

\begin{remark}
{\rm In  view of Propositions~\ref{pro:(b)} and~\ref{pro:(c)} 
a $3$-isoregular $n$-bicirculant, $n$ even, 
may only exist  when $n=2m^2$ for $m$ even.
As mentioned in Example~\ref{ex:8-bicirculants} 
the Clebsch graph and $K_4 \square K_4$ are
$3$-isoregular $8$-bicirculants, fitting the above requirement for $m=2$.
The next possibility for such a $3$-isoregular bicirculant
could occur for $m=4$. Note that in this case the parameters of a potential $3$-isoregular bicirculant would be $(64,28,12,12)$ or $(64,36,20,20)$. While the exact number of strongly regular graphs with these parameters is not known, the results in \cite{FI} show that there is at least 11,063,360 strongly regular graphs with parameters $(64,28,12,12)$, and at least 8,613,977 strongly regular graphs with parameters $(64,36,20,20)$.}
\end{remark}


\section{Local $3$-isoregularity of strongly regular  tricirculants}
\label{sec:tri}
\indent

Let $\G$ denote a strongly regular $n$-tricirculant with parameters $(3n,k,\lambda,\mu)$ where  either $n$ is a prime number or $n$ is coprime to $6\sqrt{(\lambda-\mu)^2+4(k-\mu)}$. 
Assume further that there is at least one edge in $\G$ with both endvertices
in the same orbit of the $(3,n)$-semiregular automorphism $\sigma$ of $\G$ 
and that the number of edges of $\G$ 
with both endvertices  in the same orbit of  
$\sigma$ in $\G$ is smaller than or equal to the number of edges 
with both endvertices  in the same orbit of $\sigma$ in 
$\overline{\G}$. 
In this section we show that $\G$ is not locally $3$-isoregular with respect to any vertex $x$ of $\G$.
First recall that, by Proposition~\ref{pro:tricirculants}, 
there exists an integer $s$, such that either
$$
 (3n,k,\lambda,\mu)=(3(12 s^2 + 9s + 2), (4s + 1)(3s + 1), s(4s + 3), s(4s + 1)),
$$
or
$$
  (3n,k,\lambda,\mu)=(3(3 s^2 - 3s + 1), s(3s - 1), s^2 + s - 1, s^2).
$$

\begin{proposition}
	\label{prop:tri1}
With notation as above, assume that 
the parameters of $\G$ satisfy 
	$$
	(3n,k,\lambda,\mu)=(3(12 s^2 + 9s + 2), (4s + 1)(3s + 1), s(4s + 3), s(4s + 1)).
	$$
Then an edge $(x,y)$ of $\G$ is $3$-isoregular if and only if $s=-1$, in which case $\G$ is isomorphic to the complement of the triangular graph $T(6)$. 
\end{proposition}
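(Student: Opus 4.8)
The plan is to mimic the strategy used for bicirculants: use the two relations of Proposition~\ref{pro:3iso-eq1} to express the parameters $Q$, $R$, $W$ of a hypothetical $3$-isoregular edge in terms of $s$, and then argue that the arithmetic forces $s$ into a very short list, after which an eigenvalue/Hoffman argument rules out all but $s=-1$. First I would compute the eigenvalues of $\G$. For the parameter set $(3(12s^2+9s+2),(4s+1)(3s+1),s(4s+3),s(4s+1))$ one finds $\lambda-\mu = 2s$ and $k-\mu = (4s+1)(3s+1)-s(4s+1) = (4s+1)(2s+1)$, so the non-principal eigenvalues are $r,s_{\mathrm{eig}} = \tfrac{2s \pm \sqrt{4s^2 + 4(4s+1)(2s+1)}}{2}$; I expect the discriminant to be a perfect square (something like $(6s+2)^2$, giving eigenvalues such as $4s+1$ and $-(2s+1)$), which identifies the smallest eigenvalue $-m$ with $m = 2s+1$ (for $s\ge 1$) or with the other root when $s$ is negative. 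This pins down the Hoffman bound $|C|\le 1 + k/m$ explicitly.

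Next I would feed the strong-regularity parameters into Proposition~\ref{pro:3iso-eq1}(i), obtaining $\lambda(\lambda-Q-1) = R(k-\lambda-1)$, i.e. $s(4s+3)(s(4s+3)-Q-1) = R\big((4s+1)(3s+1)-s(4s+3)-1\big)$. Solving for $R$ (or for $Q$) introduces a divisibility condition: the right-hand factor $k-\lambda-1$ must divide the left-hand product for integrality, so $Q+1$ — equivalently $Q$ — is forced to lie in a residue class modulo that factor, giving $Q = \lambda - 1 - \beta(k-\lambda-1)/\lambda$-type expressions with $\beta$ a small non-negative integer bounded because $R\ge 0$ and because $Q\ge 0$. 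Then Proposition~\ref{pro:3iso-eq1}(ii) gives a second relation, $\lambda\mu(k-2\lambda+Q) = W(k-\mu)(k-\lambda-1)$, and combining the two (as in the proof of Proposition~\ref{prop:bicirc1}, where coprimality of $m-1$ and $m+1$ did the job) should force $\beta$ to a single value, yielding closed forms for $Q$, $R$, $W$ as explicit polynomials in $s$. The consequence $W(k-\mu)=\mu(\lambda-R)$ at the end of Proposition~\ref{pro:3iso-eq1} gives a convenient consistency check and may shorten the bookkeeping.

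Once $Q$, $R$, $W$ are pinned down, the final step is to rule out all $s$ except $s=-1$. The natural tool is again the Hoffman bound applied to the clique $D^1_1(x,y)\cup\{x,y\}$, which has size $\lambda+2 = s(4s+3)+2$: requiring $Q$ to take its maximal value (the ``$\alpha=m$'' scenario of Proposition~\ref{prop:bicirc1}) makes this a genuine clique and contradicts $|C|\le 1+k/m$ for $|s|$ large; for the remaining small values of $s$ one can either use Proposition~\ref{pro:3iso-eq2}(ii) on the companion non-edge to get a non-integral $V$ (exactly the trick that killed Proposition~\ref{prop:bicirc2}), or check the tiny residual cases by hand, observing that $s=-1$ gives $(3n,k,\lambda,\mu)=(15,6,1,3)$ wait — recompute: $s=-1$ yields $3(12-9+2)=15$, $k=(-3)(-2)=6$, $\lambda=(-1)(1)=-1$; so in fact the ``natural'' parametrization must be read with the complement, and $s=-1$ corresponds to the $(15,8,4,4)$ complement of $T(6)$, i.e. the triangular graph's complement on $\binom{6}{2}$ vertices — which is genuinely $3$-isoregular (it is a rank $3$ graph, hence satisfies the $4$-vertex condition).

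The main obstacle I anticipate is the same one that appears in the bicirculant proofs but amplified: after the first relation the divisibility argument that collapses the free integer $\beta$ (or $\alpha$) to a unique value relied, in Proposition~\ref{prop:bicirc1}, on a clean $\gcd$ computation; for the tricirculant parameters the analogous $\gcd$ of the relevant polynomials in $s$ may not be constant, so I expect to have to split on the residue of $s$ modulo a small number (as is already flagged in the Remark after Proposition~\ref{pro:(c)} for the even case of bicirculants) and to handle a handful of exceptional small $|s|$ by direct substitution. Keeping careful track of signs is essential here, since the ``up to complementation'' normalization in Proposition~\ref{pro:tricirculants} means the edge-based parameters $Q,R,W$ and the non-edge-based $R',W',V$ play asymmetric roles, and the sign of $s$ interacts with which of $\G$, $\overline\G$ one is actually testing for a $3$-isoregular edge.
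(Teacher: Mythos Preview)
Your broad plan --- feed the parameters into Proposition~\ref{pro:3iso-eq1}(i),(ii) and exploit divisibility --- is exactly the paper's route, and the gcd worry you flag is real but easy: one has $\gcd(4s+3,2s+1)=1$ and $\gcd(s(4s+3),2s+1)=1$ for every integer $s$, so no case-split on residues is needed.

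There are, however, two genuine problems with your endgame.

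First, an arithmetic slip: for $s=-1$ one gets $4s+3=-1$, hence $\lambda=s(4s+3)=(-1)(-1)=1$, not $-1$. The parameter set is $(15,6,1,3)$, which \emph{is} the complement of $T(6)$; no reinterpretation via the complement is needed.

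Second, and more seriously, the two tools you propose for eliminating the remaining $s$ are not available. The Hoffman argument only bites when $Q=\lambda-1$ (so that $D_1^1\cup\{x,y\}$ is an actual clique); it does not rule out ``large $|s|$'' in general. And Proposition~\ref{pro:3iso-eq2}(ii) requires a $3$-isoregular \emph{non-edge}, which the hypothesis does not give you --- the statement concerns a single $3$-isoregular edge, not local $3$-isoregularity at a vertex.

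The paper avoids both tools. After the first divisibility step ($4(2s+1)\mid 4s^2+3s-Q-1$, giving $Q=4s^2+3s-1-4\alpha(2s+1)$ and $R=\alpha(4s+3)$), a \emph{second} divisibility step from the formula for $W$ forces $2s+1\mid s-\alpha$, so $\alpha=s-\beta(2s+1)$ with $W=\beta s(4s+3)$ and $R=(s-\beta(2s+1))(4s+3)$. Now $W\ge 0$ gives $\beta\ge 0$, and $R\ge 0$ gives $\beta\le s/(2s+1)$, which is $<1$ unless $s=-1$ (where it equals $1$). Thus $\beta=0$, or $(\beta,s)=(1,-1)$. But $\beta=0$ yields $Q=-(4s^2+s+1)<0$, a contradiction. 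So the non-negativity of $Q,R,W$ alone finishes the proof --- no Hoffman, no non-edge relation.
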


\begin{proof}
Assume that the edge $(x,y)$ is $3$-isoregular and let $Q,R,W$ denote the associated parameters. Observe that if $s =0$, then $3n=6$, $k=1$, and so $\G$ is isomorphic to $3K_2$, contradicting our assumptions that $\G$ is connected. Therefore, $s \ne 0$. 
	Using Proposition \ref{pro:3iso-eq1}(i) we get
	$$
	R=\frac{(4s+3)(4 s^2+3 s-Q-1)}{4(2s+1)}.
	$$
	Note that $(4s+3)-2(2s+1)=1$, and so for every integer $s$ we have $\gcd(4s+3,2s+1)=1$. Now clearly $4s+3$ is odd, and so also $\gcd(4s+3,4(2s+1))=1$. It follows that $4(2s+1)$ divides $4s^2+3s-Q-1$, that is,
	$$
	Q = 4s^2 + 3s - 1 - 4 \alpha (2s+1)
	$$
	for some integer $\alpha$. Note that this yields $R=\alpha(4s+3)$. Using Proposition \ref{pro:3iso-eq1}(ii) we get
	$$
	W=\frac{s(s-\alpha)(4s+3)}{2s+1}.
	$$
    Observe that $\gcd(s(4s+3),2s+1)=1$, and so $2s+1$ divides $s-\alpha$. It follows that $\alpha=s-\beta(2s+1)$ for some integer $\beta$. This gives us 
    $$
      W = \beta s (4s+3), \quad R=-(4s+3)(2\beta s - s + \beta),
    $$
    and so $W \ge 0$ implies $\beta \ge 0$.  Now if $s \ge 1$, then $R \ge 0$ gives us $\beta \le s/(2s+1)< 1$. Similarly, if $s \le -1$, then again  $R \ge 0$ gives us $\beta \le s/(2s+1)\le 1$. This shows that either $\beta=0$, or 
$s=-1$ and $\beta=1$. If $\beta=0$ then $Q=-(4s^2+s+1)$, contradicting $Q \ge 0$. If $s=-1$ and $\beta=1$, then $\G$ is the complement of the triangular graph $T(6)$, see \cite[Section 6]{KMMS09}. It is easy to check 
(using the fact that $\G$ is distance-transitive) that $(x,y)$ is indeed $3$-isoregular  with $Q=0, R=0, W=1$.
\end{proof}

\begin{proposition}
	\label{prop:tri2}
With notation as above, assume that 
the parameters of $\G$ satisfy 
	$$
	(3n,k,\lambda,\mu)=(3(3 s^2 - 3s + 1), s(3s - 1), s^2 + s - 1, s^2).
	$$
Then none of the edges of $\G$ is $3$-isoregular.
\end{proposition}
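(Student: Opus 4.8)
The plan is to mimic the structure of the proof of Proposition~\ref{prop:tri1}, assuming for contradiction that some edge $(x,y)$ of $\G$ is $3$-isoregular with associated parameters $Q,R,W$, and then forcing a contradiction on the integrality/nonnegativity of these parameters. First I would dispose of degenerate small values of $s$ (e.g. $s=0$ gives $3n=3$, $k=0$, a disconnected or trivial graph, contradicting non-triviality; similarly check $s=1$ which yields a small sporadic parameter set and can be handled by hand or ruled out directly). So assume $|s|$ is large enough that all the expressions below are genuine.

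Next I would substitute $(k,\lambda,\mu)=(s(3s-1),\,s^2+s-1,\,s^2)$ into Proposition~\ref{pro:3iso-eq1}(i), namely $\lambda(\lambda-Q-1)=R(k-\lambda-1)$. Here $k-\lambda-1 = s(3s-1)-(s^2+s-1)-1 = 2s^2-2s = 2s(s-1)$, and $\lambda = s^2+s-1 = (s-1)(s+2)+1$, which is coprime to $s$ and to $s-1$ (indeed $\gcd(s^2+s-1,s)=\gcd(-1,s)=1$ and $\gcd(s^2+s-1,s-1)=\gcd(1,s-1)=1$). Thus from $\lambda(\lambda-Q-1)=2s(s-1)R$ we deduce $\lambda \mid R$, say $R=\alpha\lambda$ for an integer $\alpha\ge 0$, and then $\lambda-Q-1 = 2s(s-1)\alpha$, i.e. $Q = \lambda - 1 - 2s(s-1)\alpha = s^2+s-2-2s(s-1)\alpha$. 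Nonnegativity of $Q$ forces $\alpha$ to be small: for $s\ge 2$ one gets $\alpha=0$ essentially immediately (since $2s(s-1) > s^2+s-2$ once $s\ge 2$), and the case $s\le -1$ needs a parallel bookkeeping check.

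Then I would feed the same data into Proposition~\ref{pro:3iso-eq1}(ii), or equivalently into the consequence $W(k-\mu)=\mu(\lambda-R)$. Here $k-\mu = s(3s-1)-s^2 = 2s^2-s = s(2s-1)$ and $\mu=s^2$, so $s(2s-1)W = s^2(\lambda-R) = s^2(\lambda - \alpha\lambda) = s^2\lambda(1-\alpha)$, giving $(2s-1)W = s\lambda(1-\alpha)$. Since $\gcd(2s-1,s)=1$ and $\gcd(2s-1,\lambda)=\gcd(2s-1,s^2+s-1)$ which one checks equals $\gcd(2s-1,\cdot)$ of a small constant (multiply $s^2+s-1$ by $4$ and reduce mod $2s-1$), we conclude $2s-1 \mid (1-\alpha)$, hence for large $|s|$ either $\alpha=1$ or $\alpha$ is forced outside the small window allowed by nonnegativity of $Q$. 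Combining $\alpha=1$ with the $Q$-constraint $Q = s^2+s-2-2s(s-1)$ yields $Q = -(s^2-3s+2)=-(s-1)(s-2)<0$ for $s\ge 3$ (and one checks $s=2$, $s\le -1$ separately), contradicting $Q\ge 0$; while $\alpha=0$ gives $R=0$ and then $W = s\lambda/(2s-1)$ fails to be an integer (or, if it happens to be for some tiny $s$, one uses instead the resulting clique $\{x,y\}\cup D^1_1$ of size $\lambda+2$ together with the Hoffman bound, exactly as in Corollary~\ref{cor:3iso-circ}, noting the smallest eigenvalue here is $s = -\sqrt{k-\mu}$, wait — more carefully $r,s = \tfrac{(\lambda-\mu)\pm\sqrt{(\lambda-\mu)^2+4(k-\mu)}}{2}$ with $\lambda-\mu = s-1$, so $-m$ is of order $s$, and $\lambda+2$ exceeds $1+k/m$).

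The main obstacle I expect is the careful case analysis over the sign of $s$ and the handful of small values where the divisibility windows do not immediately pinch to a contradiction; the arithmetic is routine but one must be disciplined about which of $\alpha\ge 0$, $Q\ge 0$, $R\ge 0$, $W\ge 0$, $V\ge 0$, and the Hoffman bound is doing the work in each regime. In particular, unlike Proposition~\ref{prop:tri1} where $s=-1$ survived and gave an actual $3$-isoregular graph (the complement of $T(6)$), here I anticipate that every residual case collapses, so the cleanest writeup pins down the two or three surviving $(s,\alpha)$ pairs and eliminates each by either a negative parameter or a clique-versus-Hoffman contradiction, concluding that no edge of $\G$ is $3$-isoregular.
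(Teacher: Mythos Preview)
Your approach is correct and matches the paper's: parametrize $R=\alpha\lambda$ via Proposition~\ref{pro:3iso-eq1}(i), pin down $\alpha\in\{0,1\}$, and eliminate each case. The paper streamlines two of your steps: the bound $\alpha\le 1$ comes directly from $W=-(\alpha-1)s\lambda/(2s-1)\ge 0$ (since $s\lambda/(2s-1)>0$ for all relevant $s$), so there is no need to combine $Q\ge 0$ with the divisibility $2s-1\mid 1-\alpha$; the leftover case $s=2,\ \alpha=1$ (where $Q=0$) is dispatched by observing that the unique SRG with parameters $(21,10,5,4)$ is $T(7)$, whose edges are not $3$-isoregular by direct inspection; and your Hoffman backup for $\alpha=0$ is unnecessary since $8s\lambda=(2s-1)(4s^2+6s-1)-1$ already shows $W=s\lambda/(2s-1)$ is never an integer for $s\notin\{0,1\}$.
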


\begin{proof}
Suppose to the contrary that an edge $(x,y)$ is $3$-isoregular and let $Q,R,W$ denote the associated  parameters. Observe that if $s =1$, then $3n=3$, and so $\G$ is isomorphic to $K_3$, contradicting our assumptions 
that there exists an edge with both endvertices in the same orbit of
$\sigma$.
 If $s \in \{0,-1\}$, then $\lambda=-1$, a contradiction. Therefore, $s \not \in \{-1,0,1\}$. 
	Using Proposition \ref{pro:3iso-eq1}(i) we get
	$$
	  R=\frac{(s^2+s-1)(s^2+s-Q-2)}{2(s-1)s}.
	$$
	Note that $(2s-1)(s^2+s-1)-(2s+3)s(s-1)=1$, and so for every integer $s$ we have $\gcd(s^2+s-1,s(s-1))=1$. Now clearly $s^2+s-1$ is odd, and so also $\gcd(s^2+s-1,2s(s-1))=1$. It follows that $2s(s-1)$ divides $s^2+s-Q-2$, that is,
	$$
	  Q = s^2 + s - 2 - 2 \alpha s (s-1)
	$$
	for some integer $\alpha$. Note that this yields $R=\alpha(s^2+s-1)$. Since $R$ is non-negative, we get that $\alpha \ge 0$. Using Proposition \ref{pro:3iso-eq1}(ii) we get
	$$
	W=-\frac{(\alpha-1)s(s^2+s-1)}{2s-1}.
	$$
	Observe that $s(s^2+s-1)/(2s-1)$ is positive, and since $W$ is non-negative this yields $\alpha \le 1$. It follows that $\alpha \in \{0,1\}$. 
	
	If $\alpha=1$, then $0 \le Q=-(s-1)(s-2)$ implies $s=2$. In this case $\G$ is isomorphic to the triangular graph $T(7)$, see \cite[Section 6]{KMMS09}.  It is easy to see, however, that in $T(7)$ none of its edges is $3$-isoregular (note that $T(7)$ is distance-transitive).
	
	If on the other hand $\alpha=0$, then we have $W=s(s^2+s-1)/(2s-1)$. It follows that $2s-1$ divides $s(s^2+s-1)$. But now
	$$
	  \frac{8s(s^2+s-1)}{2s-1} = 4s^2+6s-1-\frac{1}{2s-1}
	$$
	implies that $2s-1$ divides $1$, a contradiction.
\end{proof}

\begin{theorem}
	\label{thm:tri3}
Let $\G$ be as a strongly regular $n$-tricirculant 
with parameters $(3n,k,\lambda,\mu)$ 
where  either $n$ is a prime number or $n$ is 
coprime to $6\sqrt{(\lambda-\mu)^2+4(k-\mu)}$. 
Then for every vertex $x\in V(\G)$, it follows that
$\G$ is not locally $3$-isoregular at $x$.
\end{theorem}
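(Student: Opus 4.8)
The plan is to reduce Theorem~\ref{thm:tri3} to the two cases already handled in Propositions~\ref{prop:tri1} and~\ref{prop:tri2}, exactly as Theorem~\ref{thm:bicirc3} was reduced to Proposition~\ref{prop:bicirc2}. First I would recall that, by the symmetry of the hypotheses under complementation, local $3$-isoregularity of $\G$ at $x$ is equivalent to local $3$-isoregularity of $\overline{\G}$ at $x$ (this is the observation made in Section~\ref{sec:locally}, together with the fact that $\overline{\G}$ is again a strongly regular $n$-tricirculant satisfying the same coprimality/primality condition, since $(\lambda-\mu)^2+4(k-\mu)$ is complementation-invariant). Moreover, the hypothesis that $\G$ has at least one edge with both endvertices in the same $\sigma$-orbit and that $\G$ has no more such edges than $\overline{\G}$ does can, after possibly passing to $\overline{\G}$, be arranged; one must just check that not \emph{both} $\G$ and $\overline{\G}$ can fail to have an in-orbit edge, which is immediate since every $\sigma$-orbit has odd size $n\ge 3$ when $n$ is an odd prime (and in general from $n=3n/3$ forcing one of $\G$, $\overline{\G}$ to have an in-orbit edge).

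With that reduction in place, the key step is: if $\G$ is locally $3$-isoregular at $x$, then by definition there is a $3$-isoregular edge $(x,y)$ of $\G$. By Proposition~\ref{pro:tricirculants} the parameters of $\G$ are one of the two families listed, so Proposition~\ref{prop:tri1} or Proposition~\ref{prop:tri2} applies to this edge. Proposition~\ref{prop:tri2} says the second family has no $3$-isoregular edge at all, so that family is eliminated outright. Proposition~\ref{prop:tri1} says that in the first family a $3$-isoregular edge forces $s=-1$ and $\G\cong\overline{T(6)}$. It therefore remains only to rule out local $3$-isoregularity of $\overline{T(6)}$, equivalently of $T(6)=L(K_6)$, which has parameters $(15,8,4,4)$.

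For that final case I would argue directly, as in Example~\ref{ex:pet} and Example~\ref{ex:15}. Local $3$-isoregularity at $x$ requires, besides the $3$-isoregular edge (which exists with $Q=R=W=1$ by the computation already done in the proof of Proposition~\ref{prop:tri1}), a $3$-isoregular \emph{non}-edge $(x,z)$. So I would exhibit two non-adjacent vertices of $T(6)$ at distance $2$ from each other that, together with $x$, span the same induced subgraph type (namely $3K_1$, since the two common-neighbour structures give different ``third-vertex non-adjacent to both'' counts) but have different numbers of common neighbours; since $L(K_6)$ is distance-transitive this can be read off from the unique non-edge orbit by listing the second subconstituent. Concretely, identifying vertices with $2$-subsets of $\{1,\dots,6\}$ and adjacency with non-trivial intersection, fix $x=\{1,2\}$, $z=\{3,4\}$ (a non-edge), and compare the two independent triples $\{12,34,56\}$ and $\{12,34,35\}$: the first has no neighbour while the second has one, so the non-edge $(x,z)$ is not $3$-isoregular, and hence $T(6)$ — and thus $\overline{T(6)}$ — is not locally $3$-isoregular at any vertex.

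The main obstacle I expect is the bookkeeping in the complementation reduction: one must be careful that passing to $\overline{\G}$ preserves \emph{all} of the standing hypotheses of the section (the coprimality condition, the existence of an in-orbit edge, and the min-count condition), and that the structural dichotomy of Proposition~\ref{pro:tricirculants} is being invoked for whichever of $\G,\overline{\G}$ satisfies the min-count hypothesis. Everything else is either a citation of the two preceding propositions or the short explicit verification for the single exceptional graph $T(6)$.
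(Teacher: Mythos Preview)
Your overall strategy --- reduce via complementation to the setting of Proposition~\ref{pro:tricirculants}, then invoke Propositions~\ref{prop:tri1} and~\ref{prop:tri2} to kill every case except $\overline{T(6)}$, and finally handle that graph by hand --- is exactly the paper's approach. The problem is the last step, where your argument for $T(6)$ is backwards.

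Proposition~\ref{prop:tri1} shows that every \emph{edge} of $\overline{T(6)}$ is $3$-isoregular (with $Q=0$, $R=0$, $W=1$, not $Q=R=W=1$). Under complementation this says every \emph{non-edge} of $T(6)$ is $3$-isoregular --- not every edge. So when you write ``local $3$-isoregularity at $x$ requires, besides the $3$-isoregular edge (which exists \ldots), a $3$-isoregular non-edge'', you have the two roles swapped: in $T(6)$ the non-edges are the ones already known to be $3$-isoregular, and what you must rule out is the existence of a $3$-isoregular \emph{edge}. Your concrete computation confirms the mix-up: the triple $\{12,34,35\}$ is not independent in $T(6)$ (since $34\cap 35=\{3\}\ne\emptyset$), and in fact $56$ is the \emph{only} vertex disjoint from both $12$ and $34$, so there is only one $3K_1$-triple through the non-edge $(12,34)$ --- you cannot produce two with different valencies. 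The non-edge $(12,34)$ really is $3$-isoregular (one checks $R'=W'=2$, $V=0$).

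The fix is to do what the paper does: show that no edge of $T(6)$ (equivalently, no non-edge of $\overline{T(6)}$) is $3$-isoregular. For the edge $(12,13)$, the two $K_3$-triples $\{12,13,23\}$ and $\{12,13,14\}$ have valencies $0$ and $2$ respectively, so $(12,13)$ is not $3$-isoregular; by distance-transitivity no edge is. Hence $T(6)$, and therefore $\overline{T(6)}$, is not locally $3$-isoregular at any vertex.
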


\begin{proof}
	Assume first that $\G$ is the complement of the triangular graph $T(6)$. Then it is easy to see that none of the non-edges of $\G$ is $3$-isoregular, implying that neither of $\G$ and $\overline{\G}$ is locally $3$-isoregular at $x$. Assume now that $\G$ is not isomorphic to the triangular graph $T(6)$ or its complement. If the number of edges of $\G$ that are contained in the orbits of a $(3,n)$-semiregular automorphism $\sigma$ of $\G$ is smaller than or equal to the number of edges of $\overline{\G}$ contained in the orbits of $\sigma$, then Propositions \ref{prop:tri1} and \ref{prop:tri2} shows that $\G$ is not $3$-isoregular at $x$.  If the number of edges of $\G$ that are contained in the orbits of $\sigma$ is greater than the number of edges of $\overline{\G}$ contained in the orbits of $\sigma$,  then Propositions \ref{prop:tri1} and \ref{prop:tri2} shows that none of the non-edges $(x,y)$ of $\G$ is $3$-isoregular, and so again $\G$ is not $3$-isoregular at $x$.
\end{proof}

\begin{corollary}
	\label{cor:3iso-3}
Let $\G$ be a strongly regular $n$-tricirculant 
with parameters $(3n,k,\lambda,\mu)$ 
where  either $n$ is a prime number or $n$ is 
coprime to $6\sqrt{(\lambda-\mu)^2+4(k-\mu)}$. 
Then  
$\G$ is not $3$-isoregular. 
\end{corollary}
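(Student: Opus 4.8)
The plan is to deduce this immediately from Theorem~\ref{thm:tri3}. Recall from Section~\ref{sec:locally} that in a $3$-isoregular graph \emph{every} edge and \emph{every} non-edge is $3$-isoregular: the valency of a triple $\{x,y,z\}$ there depends only on the isomorphism type of the induced subgraph, irrespective of which triple is chosen. Moreover $\G$, being a non-trivial strongly regular $n$-tricirculant by hypothesis, is neither complete nor edgeless, so every vertex $x$ has at least one neighbor and at least one non-neighbor (equivalently, $k\ge 1$ and $l=3n-k-1\ge 1$ by non-triviality).

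I would then argue by contradiction. Suppose $\G$ is $3$-isoregular, and fix an arbitrary vertex $x$. Choose a neighbor $y$ and a non-neighbor $z$ of $x$; by the remark above, the edge $(x,y)$ and the non-edge $(x,z)$ are both $3$-isoregular, so $\G$ is locally $3$-isoregular at $x$. The parameter hypotheses of Corollary~\ref{cor:3iso-3} — that $n$ is prime or coprime to $6\sqrt{(\lambda-\mu)^2+4(k-\mu)}$ — are verbatim those of Theorem~\ref{thm:tri3}, which therefore applies and asserts that $\G$ is \emph{not} locally $3$-isoregular at $x$. This contradiction proves that $\G$ cannot be $3$-isoregular.

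There is essentially no obstacle: the whole content resides in Theorem~\ref{thm:tri3}, whose proof reduces via Proposition~\ref{pro:tricirculants} to the two admissible parameter families for strongly regular tricirculants and eliminates each through Propositions~\ref{prop:tri1} and~\ref{prop:tri2} (passing to $\overline{\G}$ in the case where $\G$ has more within-orbit edges than $\overline{\G}$, and handling the exceptional complement of $T(6)$ by inspecting its non-edges). The only point I would be careful to record is that the paper's standing convention — restricting to non-trivial strongly regular graphs — is in force, so that $x$ genuinely admits both a $3$-isoregular edge and a $3$-isoregular non-edge once $\G$ is assumed $3$-isoregular, which is exactly what is needed to invoke local $3$-isoregularity and hence Theorem~\ref{thm:tri3}.
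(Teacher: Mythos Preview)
Your proposal is correct and is exactly the intended argument: the paper states the corollary without proof immediately after Theorem~\ref{thm:tri3}, since a $3$-isoregular graph is trivially locally $3$-isoregular at every vertex (given the standing non-triviality convention that guarantees both a neighbor and a non-neighbor). There is nothing to add.
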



\end{document}